\theoremstyle{plain}\newtheorem{definition}{Definition}[section]
\theoremstyle{definition}\newtheorem{theorem}{Theorem}[section]
\theoremstyle{plain}\newtheorem{lemma}[theorem]{Lemma}
\theoremstyle{plain}
\theoremstyle{plain}\newtheorem{proposition}[theorem]{Proposition}
\theoremstyle{remark}\newtheorem{remark}{Remark}[section]
\newcommand{\Div}{\mathrm{div}}
\newcommand{\R}{\mathbb{R}}
\newcommand{\be}{\begin{equation}}
\newcommand{\ee}{\end{equation}}
\newcommand{\ba}{\begin{aligned}}
\newcommand{\ea}{\end{aligned}}
\newcommand{\f}{\frac}
\newcommand{\ben}{\begin{enumerate}}
\newcommand{\een}{\end{enumerate}}
\newcommand{\Rmnum}[1]{\expandafter\@slowromancap\romannumeral #1@}
\begin{document}
\title{Decay of solutions to the three-dimensional generalized Navier-Stokes equations}
\author{}
\date{}
\maketitle \maketitle \centerline{\scshape Quansen Jiu\footnote{The
research is partially supported by National Natural Sciences
Foundation of China (No. 11171229, 11231006 and 11228102) and
Project of Beijing Chang Cheng Xue Zhe.}}

{\centerline{School of Mathematical Sciences, Capital Normal
University}
  \centerline{Beijing  100048, P. R. China}
   \centerline{  \it Email: jiuqs@mail.cnu.edu.cn}

\vspace{3mm}
\centerline{\scshape Huan Yu}
\centerline{School of Mathematical Sciences, Capital Normal University}
\centerline{Beijing  100048, P. R. China}
\centerline{\it Email: yuhuandreamer@163.com}

\begin{abstract}
In this paper, we first obtain the temporal decay estimates for weak
solutions to the three  dimensional generalized Navier-Stokes
equations. Then, with these estimates at disposal, we obtain the
temporal decay estimates for higher order derivatives of the smooth
solution with small initial data.   The decay rates are optimal in
the sense that they coincides with ones of the corresponding
generalized heat equation. These results improve the previous known
results  to the classical Navier-Stokes equations.
\end{abstract}
{\bf Keywords:} Generalized Navier-Stokes equations; decay; Fourier-splitting method

\section{Introduction}
\setcounter{section}{1}\setcounter{equation}{0}
The incompressible Navier-Stokes equations can be written as
\be\label{eq-0}
\left\{\ba
&u_{t}+(u\cdot\nabla)u-\nu\Delta u=-\nabla p,\\
&\Div~ u=0,\\
&u(x,0)=u_{0}(x),\\  \ea\ \right. \ee
where $x\in
\mathbb{R}^n,~n\geq2,~ t>0$, the vector field $u=u(x,t)$ denotes the velocity of the fluid, $p=p(x,t)$ is the pressure of the fluid and the positive $\nu$ is the viscosity coefficient.

Whether or not weak solutions of \eqref{eq-0} decay to zero in
$L^{2}$ as  time tends to infinity was posed by Leray in his
pioneering paper \cite{[Leray1],[Leray2]}. Kato \cite{[Kato]} gave
the first affirmative answer to the strong solutions with small data
to system \eqref{eq-0}. Algebraic decay rates for weak solutions to
system \eqref{eq-0} were first obtained by Schonbek
\cite{[Schonbek1]}, in which the Fourier splitting method was
introduced to prove that there exists a Leray-Hopf weak solution of
\eqref{eq-0} in three space dimension with arbitrary data in
$L^{1}\cap L^{2}$, satisfying
$$ \|u(t)\|_{2}\leq C(t+1)^{-\frac{1}{4}}$$ where  the constant C  depends only  on the $L^{1}$ and $L^{2}$
norms of the initial data. Later the method in \cite{[Schonbek1]}
was extended by Schonbek \cite{[Schonbek2]} (see also Kajikiya and
Miyakawa \cite{[K-M]},  Wiegner \cite{[W]} for the case $\R^{n}$
(n=2,3,4)) and it was proved that the decay rate  for Leray-Hopf
solutions of \eqref{eq-0} in three space dimension with large data
in $L^{p}\cap L^{2}$ with $1\leq p<2$ is same as those for the
solution of the heat equation. That is,
$$ \|u(t)\|_{2}\leq C(t+1)^{-\frac{3}{4}(\frac{2}{p}-1)},$$ where  the constant C only depends on the $L^{p}$ and $L^{2}$ norms of the initial data.
On the decay of solutions to the Navier-Stokes equations, it is also
referred to \cite{[B-M],[C],[H],[K-O],[Mare],[U]} and references
therein.

In this paper, we consider the large-time behavior of solutions to
the following Cauchy problem for the incompressible generalized
Navier-Stokes equations  \be\label{eq-1} \left\{\ba
&u_{t}+(u\cdot\nabla)u+\nu\Lambda^{2\alpha}u=-\nabla p,\\
&\Div~ u=0,\\
&u(x,0)=u_{0}(x), \ea\ \right. \ee where $x\in
\mathbb{R}^n,~n\geq2,~ t>0$,
 $\Lambda^{2\alpha}$ is
defined through Fourier transform (see \cite{[STEIN]})
$$~~\widehat{\Lambda^{2\alpha}f}(\xi)=|\xi|^{2\alpha}\widehat{f}(\xi),
~~\widehat{f}(\xi)=\int_{\R^{n}}f(x)e^{-2\pi ix\cdot\xi}dx.$$ It is
known that if $(u(x,t), p(x,t))$ is a solution to the
three-dimensional generalized Navier-Stokes equations, then for any
$\lambda>0$, the scalings $(u_{\lambda}(x,t),p_{\lambda}(x,t)) =
(\lambda^{2\alpha-1}u(\lambda x,\lambda^{2\alpha}t),
\lambda^{4\alpha-2}p(\lambda x,\lambda^{2\alpha}t))$ also solves the
generalized Navier-Stokes equations. The corresponding energy is
$$E(u_{\lambda})=\displaystyle\sup_{t}\int_{\R^{3}}|u_{\lambda}|^{2} dx+\int^{\infty}_{0}\int
_{\R^{3}}|\Lambda^{\alpha}u_{\lambda}|^{2} dxdt
=\lambda^{4\alpha-5}E(u).$$  It follows that
$E(u_{\lambda})\rightarrow \infty$ as $\lambda\rightarrow\infty$
when $\alpha<\frac{5}{4}$. In this sense, we say that the
three-dimensional generalized Navier-Stokes equations \eqref{eq-1}
is supercritical if $\alpha<\frac{5}{4}$, critical for
$\alpha=\frac{5}{4}$ and subcritical with $\alpha>\frac{5}{4}$. It
has been proved that when $\alpha\geq \f{5}{4}$, the
three-dimensional generalized Navier-Stokes equations admits a
global and unique regular solution (see \cite{[L]},
\cite{[Wujiahong]} for instance).

In this paper, we are concerned with the asymptotic behavior of
solution of \eqref{eq-1} in the supercritical case
$\alpha<\frac{5}{4}$. Motivated by
\cite{[Schonbek1]}-\cite{[Schonbek3]}, we will show that the weak
solutions to \eqref{eq-1}  subject to large initial data decay in
$L^{2}$ at a uniform algebraic rate. The decay estimates for the
higher order derivatives of the smooth solution with small initial
data will also be established in $L^{2}$.  To prove our main
results, the Fourier splitting method due to Schonbek
\cite{[Schonbek1]} with appropriate modification will be applied. It
should be noted that the decay rates obtained in this paper are
optimal in the sense that they coincide with ones of the
corresponding generalized heat equation
$v_{t}+\nu\Lambda^{2\alpha}v=0$ with the same initial data $u_{0}$
(see Lemma 3.1 in \cite{[M]}). Therefore, our results improve ones
obtained
 in \cite{[Schonbek2]} in which the classical Navier-Stokes
 equations ($\alpha=1$ in \eqref{eq-1}) are investigated. For
 completeness, the proof of existence of weak
solutions will be sketched
 in Appendix  in the end of the paper.

Throughout the rest of the paper the $L^{p}$- norm of a function $f$ is denoted by $\|f\|_{p}$ and the $H^{s}$- norm by $\|f\|_{H^{s}}$. We will also set $\nu=1$
for simplicity.

Our main results are listed as follows.

\begin{theorem}\label{the0}
 Let $0<\alpha\leq1$. Then for divergence-free vector-field $u_{0}\in L^{2}(\R^{3})\cap L^{p}(\R^{3})$ with $1\leq p<2$,  the system \eqref{eq-1} admits a weak solution such that
\begin{equation}\label{1}
\|u(t)\|_{2}^{2}\leq C(t+1)^{-\frac{3}{2\alpha}(\frac{2}{p}-1)},
\end{equation}  where the constant C depends on $\alpha$, the $L^{p}$ and $L^{2}$ norms of the initial data.
\end{theorem}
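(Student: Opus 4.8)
The plan is to adapt Schonbek's Fourier-splitting method to the fractional dissipation $\Lambda^{2\alpha}$. First I would record the basic energy inequality: testing \eqref{eq-1} against $u$ and using $\Div~u=0$ to annihilate both the convective and the pressure terms gives
$$\f{d}{dt}\|u(t)\|_2^2 + 2\|\Lambda^{\alpha}u(t)\|_2^2 \le 0,$$
which by Plancherel reads $\f{d}{dt}\|u\|_2^2 + 2\int_{\R^3}|\xi|^{2\alpha}|\hat u(\xi,t)|^2\,d\xi \le 0$. Introducing the time-dependent ball $B(t)=\{\xi:|\xi|\le\rho(t)\}$ with $\rho(t)^{2\alpha}=k/(t+1)$ for a constant $k$ to be fixed, I would discard the dissipation inside $B(t)$ and bound it below by $\rho(t)^{2\alpha}\int_{B(t)^{c}}|\hat u|^2$ outside, which after multiplying by the integrating factor $(t+1)^{2k}$ turns the energy inequality into
$$\f{d}{dt}\Big[(t+1)^{2k}\|u\|_2^2\Big] \le 2k(t+1)^{2k-1}\int_{B(t)}|\hat u(\xi,t)|^2\,d\xi.$$
Everything then reduces to controlling the low-frequency mass $\int_{B(t)}|\hat u|^2$.

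For that I would use Duhamel's formula for $\hat u$. Writing the convective term as $\Div(u\otimes u)$ and eliminating the pressure via the Leray projection, the bound $|\widehat{u\otimes u}(\xi)|\le\|u\|_2^2$ yields the pointwise estimate $|\hat u(\xi,t)|\le|\hat u_0(\xi)|+C|\xi|\int_0^t\|u(s)\|_2^2\,ds$. Squaring and splitting, the data part is handled by Hausdorff--Young ($\|\hat u_0\|_{p'}\le\|u_0\|_p$ with $p'=p/(p-1)$) together with Hölder on the ball, giving $\int_{B(t)}|\hat u_0|^2\le C\|u_0\|_p^2\,\rho(t)^{3(2/p-1)}$, which is exactly the target rate; the nonlinear part is controlled by $C\big(\int_0^t\|u(s)\|_2^2\,ds\big)^2\rho(t)^5$.

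The estimate \eqref{1} then follows by a bootstrap. Inserting these two bounds and integrating, the data term produces precisely the exponent $\beta=\f{3}{2\alpha}(2/p-1)$, while the nonlinear term, once $\|u(s)\|_2^2\le C(s+1)^{-\gamma}$ is known (so that $\int_0^s\|u\|_2^2\le C(s+1)^{1-\gamma}$ for $\gamma<1$), contributes the decaying factor $(t+1)^{-(\frac{5}{2\alpha}-2+2\gamma)}$. Starting from the crude bound $\gamma_0=0$ coming from energy boundedness, and using that $\f{5}{2\alpha}-2>0$ for $\alpha\le1$, each iteration replaces $\gamma$ by $\min\big(\beta,\f{5}{2\alpha}-2+2\gamma\big)$; the nonlinear exponent grows geometrically, so after finitely many steps it crosses $\beta$. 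Once $\gamma\ge1$ the integral $\int_0^s\|u\|_2^2\,ds$ is bounded and the nonlinear exponent saturates at $\f{5}{2\alpha}$, which satisfies $\f{5}{2\alpha}\ge\beta$ for all $p\ge 3/4$ (hence for $1\le p<2$); at that point the nonlinear term is subdominant and the optimal rate \eqref{1} is attained.

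The main obstacle is closing the bootstrap cleanly: I must choose $k$ large enough that every integration converges, handle the borderline transition at $\gamma=1$ where $\int_0^t\|u\|_2^2\,ds$ passes from algebraic growth to boundedness, and confirm that the geometric increase of the nonlinear exponent actually overtakes $\beta$ in finitely many steps for the whole range $0<\alpha\le1$, $1\le p<2$. A secondary but genuine technical point is that weak solutions satisfy only the energy inequality and the Duhamel identity holds a priori only for the smooth approximations; the entire argument should therefore be run at the level of the approximate solutions constructed in the Appendix and then passed to the limit.
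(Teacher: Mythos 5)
Your proposal is correct, and it rests on the same pillars as the paper's proof --- the energy identity, Fourier splitting on a ball of radius $\sim(t+1)^{-1/(2\alpha)}$, the pointwise bound $|H(\xi,t)|\le C|\xi|\|u(t)\|_2^2$ for the nonlinear/pressure contribution, and Hausdorff--Young for the data term --- but it diverges at the key lemma. The paper (Lemma~\ref{lemb1}) keeps the dissipative kernel inside the Duhamel integral and uses $\int_0^t e^{-|\xi|^{2\alpha}(t-s)}\,ds\le|\xi|^{-2\alpha}$ together with energy boundedness to obtain the \emph{time-uniform} bound \eqref{add4}, $|\widehat{u}(\xi,t)|\le C\bigl(|\widehat{u_0}(\xi)|+|\xi|^{1-2\alpha}\bigr)$; feeding this into the split energy inequality yields the rate $\min\bigl(\beta,\frac{5-4\alpha}{2\alpha}\bigr)$, with $\beta=\frac{3}{2\alpha}(\frac{2}{p}-1)$, in a single stroke, so a bootstrap is needed only in the subcase $\frac12<\alpha\le1$, $1\le p<\frac{3}{4-2\alpha}$, and there only one iteration (which drops the kernel and uses the already-obtained decay, exactly in your style --- see \eqref{v1}--\eqref{v0}). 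You instead discard the kernel from the start, getting the time-growing bound $|\widehat{u_0}(\xi)|+C|\xi|\int_0^t\|u\|_2^2\,ds$, and run the full Schonbek-type iteration $\gamma\mapsto\min\bigl(\beta,\frac{5}{2\alpha}-2+2\gamma\bigr)$. The two routes give the same first-step exponent, since $\frac{5}{2\alpha}-2=\frac{5-4\alpha}{2\alpha}$, and your iteration in fact terminates after two passes rather than merely ``finitely many'': if $\gamma_1=\frac{5-4\alpha}{2\alpha}<1$, the next nonlinear exponent is $3\cdot\frac{5-4\alpha}{2\alpha}\ge\frac{3}{2\alpha}\ge\beta$ for $\alpha\le1$ and $p\ge1$, while if $\gamma_1\ge1$ the exponent saturates at $\frac{5}{2\alpha}\ge\beta$ (with a harmless logarithm at the borderline $\alpha=\frac56$, the same borderline the paper treats in \eqref{v0}). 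The price of your route is exactly the bookkeeping you flag --- the $\gamma=1$ transition and a fresh choice of large splitting constant at each pass --- whereas the paper's kernel-kept lemma buys an essentially one-pass argument; conversely, your scheme is self-contained and closer to Schonbek's original. Your final caveat is also the paper's: the computation is formal for weak solutions and must be performed on the Friedrichs approximations of the Appendix and passed to the limit, as stated in the remark closing Section 2.
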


\begin{theorem}\label{the1}
 Let $1\leq\alpha<\frac{5}{4}$. Then for divergence-free vector-field $u_{0}\in L^{2}(\R^{3})\cap L^{p}(\R^{3})$ with $\frac{1}{3-2\alpha}\leq p<2$,  the system \eqref{eq-1} admits a  weak solution such that
\begin{equation}\label{2}\|u(t)\|_{2}^{2}\leq C(t+1)^{-\frac{3}{2\alpha}(\frac{2}{p}-1)},
\end{equation} where the constant C depends on $\alpha$, the $L^{p}$ and $L^{2}$ norms of the initial data.
\end{theorem}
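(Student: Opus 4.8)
The plan is to adapt Schonbek's Fourier-splitting method, combined with a short bootstrap that feeds a provisional decay rate back into the low-frequency estimate of the nonlinear term. I would begin from the energy inequality for weak solutions, $\frac{d}{dt}\|u\|_2^2 + 2\|\Lambda^\alpha u\|_2^2 \le 0$, and rewrite the dissipation via Plancherel as $\|\Lambda^\alpha u\|_2^2 = \int_{\R^3}|\xi|^{2\alpha}|\hat u(\xi,t)|^2\,d\xi$. Splitting frequency space by the shrinking ball $B(t)=\{\xi:|\xi|^{2\alpha}\le \frac{k}{2(t+1)}\}$ (with $k$ a large constant) and estimating $|\xi|^{2\alpha}$ from below off $B(t)$ turns this into $\frac{d}{dt}\|u\|_2^2 + \frac{k}{t+1}\|u\|_2^2 \le \frac{k}{t+1}\int_{B(t)}|\hat u|^2\,d\xi$; multiplying by $(t+1)^k$ reduces the whole problem to bounding the low-frequency mass $\int_{B(t)}|\hat u|^2\,d\xi$, where the ball has radius $g(t)\sim (t+1)^{-1/(2\alpha)}$.

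To estimate that mass I would pass to the mild (Duhamel) formulation after the Leray projection $P$, namely $\hat u(\xi,t)=e^{-|\xi|^{2\alpha}t}\hat u_0(\xi)-\int_0^t e^{-|\xi|^{2\alpha}(t-s)}\,\widehat{P\,\Div(u\otimes u)}(\xi,s)\,ds$. On $B(t)$ the semigroup symbols are $\le 1$, and using $\widehat{\Div(u\otimes u)}=i\xi\cdot\widehat{u\otimes u}$ with $|\widehat{u\otimes u}|\le\|u\otimes u\|_1\le\|u\|_2^2$ I obtain the pointwise bound $|\hat u(\xi,t)|\le|\hat u_0(\xi)|+C|\xi|\int_0^t\|u(s)\|_2^2\,ds$. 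Squaring and integrating over $B(t)$ gives two pieces. The linear piece is controlled by Hausdorff–Young ($\hat u_0\in L^{p'}$) and Hölder over the ball, yielding $\int_{B(t)}|\hat u_0|^2\le C\|u_0\|_p^2\,g^{3(2/p-1)}=C\|u_0\|_p^2(t+1)^{-\beta}$ with $\beta:=\frac{3}{2\alpha}(\frac 2p-1)$, exactly the target rate. The nonlinear piece, using $\int_{B(t)}|\xi|^2\,d\xi=Cg^5$, is $C\big(\int_0^t\|u\|_2^2\,ds\big)^2(t+1)^{-5/(2\alpha)}$.

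The decisive difficulty is the nonlinear piece, because $\int_0^t\|u\|_2^2\,ds$ depends on the decay rate being proved. I would close this by a two-step bootstrap. Feeding in the crude energy bound $\|u\|_2^2\le\|u_0\|_2^2$ gives $\int_0^t\|u\|_2^2\,ds\lesssim t+1$, hence after the integrating-factor step $\|u\|_2^2\lesssim (t+1)^{-\min\{\beta,\,a\}}$ with $a:=\frac{5}{2\alpha}-2$, which is positive precisely because $\alpha<\frac 54$. Feeding the improved bound $\|u\|_2^2\lesssim(t+1)^{-a}$ (with $a<1$, so $\int_0^t\|u\|_2^2\,ds\lesssim(t+1)^{1-a}$) back into the nonlinear piece and using $\frac{5}{2\alpha}=a+2$ upgrades the nonlinear contribution to $(t+1)^{-3a}$, so that $\|u\|_2^2\lesssim(t+1)^{-\min\{\beta,\,3a\}}$.

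The hypothesis $\frac{1}{3-2\alpha}\le p$ is exactly equivalent to $\beta\le 3a$ (indeed $\beta\le 3a\Leftrightarrow \frac 2p-1\le 5-4\alpha\Leftrightarrow \frac 1p\le 3-2\alpha$), so these two bootstrap steps suffice to reach the optimal rate $\|u\|_2^2\lesssim(t+1)^{-\beta}$; outside this range one would simply need more iterations of the same expanding recursion $\gamma\mapsto 2\gamma+a$. What remains — verifying that all the time-exponents produced by the integrating factor stay in the admissible ranges, and fixing $k$ large enough — is routine bookkeeping.
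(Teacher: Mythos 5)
Your proposal is correct and follows essentially the same route as the paper's proof: Schonbek's Fourier splitting plus a pointwise Duhamel bound on the low-frequency ball, a first pass yielding the provisional rate $(t+1)^{-\frac{5-4\alpha}{2\alpha}}$ (your $a=\frac{5}{2\alpha}-2$), and a single bootstrap iteration whose success condition $\beta\leq 3a$ is exactly the hypothesis $p\geq\frac{1}{3-2\alpha}$, matching the paper's Case I / Case II split. The only cosmetic difference is that you bound the semigroup kernel by $1$ and exploit $|\xi|\lesssim (t+1)^{-1/(2\alpha)}$ on the ball, whereas the paper's Lemma 2.1 integrates the kernel to obtain the $|\xi|^{1-2\alpha}$ weight; both give identical exponents.
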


The following are decay estimates for the higher order derivatives
of the smooth solution, of which global-in-time existence for
sufficiently small initial data is guaranteed  in \cite{[Wu]}.

\begin{theorem}\label{the2}
Let $0<\alpha\leq 1$ and $u_{0}\in L^{2}(\R^{3})\cap L^{1}(\R^{3})$
with $\Div~ u_{0}=0$.
 Then,  for $m\in \mathbb{N}$ (the set of positive integers), there exist  $T_{0}>0$ and $C>0$ such  that the small
 global-in-time solution satisfies  $$\|D^{m}u(t)\|_{2}^{2}\leq C(t+1)^{-\frac{3}{2\alpha}-\frac{m}{\alpha}}$$
 for all $t>T_{0}$, where the constant $C$ depends on $m$, $\alpha$ and $\|u_{0}\|_{L^{2}\cap L^{1}}$.
\end{theorem}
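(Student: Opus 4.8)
The plan is to argue by induction on $m$, taking as the base case the rate supplied by Theorem \ref{the0} with $p=1$, namely $\|u(t)\|_2^2\leq C(t+1)^{-3/(2\alpha)}$ (the $m=0$ instance), and to propagate the rate to each derivative order through the Fourier-splitting method applied to the differentiated energy identity. Differentiating \eqref{eq-1} $m$ times, pairing with $D^m u$ in $L^2(\R^3)$, and using $\Div~u=0$ to annihilate the pressure and the top-order transport term $\int u\cdot\nabla D^m u\cdot D^m u\,dx=0$, I would obtain $\tfrac12\tfrac{d}{dt}\|D^m u\|_2^2+\|\Lambda^\alpha D^m u\|_2^2=-\int_{\R^3}[D^m,u\cdot\nabla]u\cdot D^m u\,dx$, whose right-hand side is the commutator produced by the nonlinearity.

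For the dissipative term I would use Plancherel, $\|\Lambda^\alpha D^m u\|_2^2=\int_{\R^3}|\xi|^{2\alpha}|\xi|^{2m}|\hat u(\xi)|^2\,d\xi$, and split frequency space at $|\xi|=g(t)$ with $g(t)^{2\alpha}=\beta/(t+1)$ for a large constant $\beta$. On $\{|\xi|>g(t)\}$ one has $|\xi|^{2\alpha}\geq g(t)^{2\alpha}$, so $\|\Lambda^\alpha D^m u\|_2^2\geq g(t)^{2\alpha}\|D^m u\|_2^2-g(t)^{2\alpha}\int_{|\xi|\leq g(t)}|\xi|^{2m}|\hat u|^2\,d\xi$, and the low-frequency remainder is controlled by $g(t)^{2m}\|u\|_2^2\leq C\beta^{m/\alpha}(t+1)^{-m/\alpha-3/(2\alpha)}$, which is exactly Theorem \ref{the0}. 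Inserting this and multiplying by the integrating factor $(t+1)^{2\beta}$ gives $\tfrac{d}{dt}\big[(t+1)^{2\beta}\|D^m u\|_2^2\big]\leq C(t+1)^{2\beta-1-m/\alpha-3/(2\alpha)}+(t+1)^{2\beta}|\mathrm{NL}|$; choosing $\beta>\tfrac{m}{2\alpha}+\tfrac{3}{4\alpha}$, the first term integrates to the target rate $(t+1)^{2\beta-m/\alpha-3/(2\alpha)}$.

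The main obstacle is the nonlinear commutator term $\mathrm{NL}$. Using $\Div~u=0$ together with a Kato--Ponce commutator estimate I would bound $|\mathrm{NL}|\leq C\|\nabla u\|_\infty\|D^m u\|_2^2$, so the scheme closes provided $\|\nabla u\|_\infty$ decays fast enough to subordinate $\int_{T_0}^t(s+1)^{2\beta}\|\nabla u\|_\infty\|D^m u\|_2^2\,ds$ to the target. Writing $M(t)=\sup_{[T_0,t]}(s+1)^{3/(2\alpha)+m/\alpha}\|D^m u(s)\|_2^2$ and $\|\nabla u\|_\infty\leq C(t+1)^{-\rho}$, this nonlinear contribution to $(t+1)^{3/(2\alpha)+m/\alpha}\|D^m u\|_2^2$ is $\leq CM(t)(t+1)^{1-\rho}$; hence it is absorbed into $\tfrac12 M(t)$ once $T_0$ is large, as long as $\rho>1$. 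The rate $\rho$ is supplied by Gagliardo--Nirenberg, $\|\nabla u\|_\infty\lesssim\|\nabla u\|_2^{1-\theta}\|D^{N+1}u\|_2^{\theta}$ with $\theta=\tfrac{3}{2N}$, which together with the derivative decay rates yields the self-similar value $\rho=2/\alpha$ matching the generalized heat flow; since $\alpha\leq1$ forces $1-2/\alpha\leq-1<0$, the threshold $\rho>1$ is comfortably met (even the crude Sobolev rate $\rho=5/(4\alpha)$ would suffice).

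Finally I would close the argument by a continuity/bootstrap: the uniform higher-Sobolev bounds guaranteed for the small global solution of \cite{[Wu]} keep all the constants controlled and make $\|\nabla u\|_\infty$ small, while taking $T_0$ large renders $(t+1)^{1-\rho}\leq(T_0+1)^{1-\rho}<\tfrac12$, so that $M(t)\leq C$ follows, i.e. $\|D^m u(t)\|_2^2\leq C(t+1)^{-3/(2\alpha)-m/\alpha}$ for $t>T_0$. The delicate point on which I expect to spend the most effort is the apparent circularity in the bound for $\|\nabla u\|_\infty$, since that interpolation calls on derivative decays of order $N+1\geq3$; I would resolve it either by a preliminary Duhamel estimate giving the heat rate $(t+1)^{-2/\alpha}$ for $\nabla u$ directly from the $L^1\cap L^2$ data, or by running a simultaneous strong induction over a finite block of derivative orders before propagating upward, keeping track that $\beta$ may be enlarged freely while the constants remain uniform.
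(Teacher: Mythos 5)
Your skeleton---Fourier splitting applied to the differentiated energy identity, absorption of the commutator term through a decay rate for $\|\nabla u\|_{\infty}$, and induction anchored at Theorem \ref{the0} with $p=1$---is the same as the paper's, and your low-frequency bound $\int_{|\xi|\leq g(t)}|\xi|^{2m}|\widehat{u}|^{2}d\xi\leq g(t)^{2m}\|u\|_{2}^{2}$ is even a mild simplification (the paper instead peels off one derivative and uses the inductive hypothesis on $\|\Lambda^{m-1}u\|_{2}$). The genuine gap is exactly the point you flag at the end and do not resolve: a \emph{non-circular} source for $\|\nabla u\|_{\infty}\leq C(t+1)^{-\rho}$ with $\rho>1$. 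The paper manufactures one through an intermediate result your proposal never formulates (its Theorem \ref{10-22-2}): since Wu's small-data solution satisfies the monotone inequality \eqref{f}, $\frac{d}{dt}\|u\|_{H^{s}}^{2}\leq-\|\Lambda^{\alpha}u\|_{H^{s}}^{2}$, in which the nonlinearity is already absorbed, Fourier splitting can be run on the full $H^{s}$ norm to give $\|u\|_{H^{s}}^{2}\leq C(t+1)^{-3/(2\alpha)}$, and Gagliardo--Nirenberg then yields $\|\nabla u\|_{\infty}\leq C\|u\|_{2}^{1/6}\|\Lambda^{3}u\|_{2}^{5/6}\leq C(t+1)^{-3/(4\alpha)}$ with no circularity. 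You invoke only Wu's uniform $H^{s}$ bounds (smallness, but no time decay) and the sharp derivative rates $\rho=2/\alpha$ (which are the statement being proven). Neither of your suggested repairs is carried out, and the Duhamel route is not a ``preliminary estimate'': to reach $(t+1)^{-2/\alpha}$ for $\nabla u$ one must handle the operator bound $\|\nabla^{2}e^{-(t-s)\Lambda^{2\alpha}}\|_{L^{1}\to L^{\infty}}\sim(t-s)^{-5/(2\alpha)}$, whose singularity at $s=t$ is non-integrable, forcing exactly the bootstrap whose starting point is missing.

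Second, even granting the non-circular rate $\rho=3/(4\alpha)$, your absorption threshold $\rho>1$ fails on $3/4\leq\alpha\leq1$ (there $3/(4\alpha)\leq1$), in particular for the classical Navier--Stokes case $\alpha=1$. (Your parenthetical claim that ``the crude Sobolev rate $\rho=5/(4\alpha)$ would suffice'' has no non-circular source either: $5/(4\alpha)$ is the decay rate of $\|\nabla u\|_{2}$, i.e.\ the $m=1$ case of the theorem itself.) This is precisely why the paper splits into cases: for $\frac{1}{2}<\alpha\leq1$ it abandons the commutator bound and instead uses the product estimate \eqref{com1} together with the interpolation $\|\Lambda^{m-\alpha+1}u\|_{2}\leq C\|\Lambda^{m+\alpha}u\|_{2}^{\frac{1}{\alpha}-1}\|\Lambda^{m}u\|_{2}^{2-\frac{1}{\alpha}}$ and Young's inequality, spending a fraction of the dissipation $\|\Lambda^{m+\alpha}u\|_{2}^{2}$ to raise the time-decaying coefficient to $\|u\|_{\infty}^{\frac{2\alpha}{2\alpha-1}}\leq C(t+1)^{-\frac{3}{4\alpha-2}}$, and $\frac{3}{4\alpha-2}\geq\frac{3}{2}>1$ for all $\alpha\leq1$. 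Some device of this kind (or a genuinely new estimate for $\|\nabla u\|_{\infty}$) is indispensable; without it your scheme closes only for $\alpha<\frac{3}{4}$, so as written the proposal does not prove the theorem.
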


\vspace{3mm}
\begin{remark} The following cases can be dealt with in a similar
fashion:

(1)  If  $0<\alpha\leq \frac{1}{2}$ and $u_{0}\in L^{2}(\R^{3})\cap
L^{p}(\R^{3})$ with $1\leq p\leq\frac{6}{4\alpha+3}$, one has
$$\|D^{m}u(t)\|_{2}^{2}\leq
C(t+1)^{-\frac{3}{2\alpha}(\frac{2}{p}-1)-\frac{m}{\alpha}}.$$ To
prove this result, we just modify the estimate \eqref{k3} as
$$\|\nabla u\|_{\infty}\leq
C(t+1)^{-\frac{3}{4\alpha}(\frac{2}{p}-1)}.$$

(2) If $\frac{1}{2}<\alpha\leq 1$ and $u_{0}\in L^{2}(\R^{3})\cap
L^{p}(\R^{3})$ with $1\leq p\leq\frac{6}{4\alpha+1}$, one has
$$\|D^{m}u(t)\|_{2}^{2}\leq
C(t+1)^{-\frac{3}{2\alpha}(\frac{2}{p}-1)-\frac{m}{\alpha}}.$$ To
prove this result, we just modify the estimate
 \eqref{h} as
$$\|u\|_{\infty}\leq C(t+1)^{-\frac{3}{4\alpha}(\frac{2}{p}-1)}.$$
\end{remark}

\begin{theorem}\label{addthe}
Let $1\leq\alpha< \frac{5}{4}$ and $u_{0}\in L^{2}(\R^{3})\cap
L^{p}(\R^{3})$  with $\Div~ u_{0}=0$ and $\frac{1}{3-2\alpha}\leq
p<2$.
 Then,  for $m\in \mathbb{N}$ (the set of positive integers),  there exist  $T_{0}>0$ and $C>0$ such that the
 small global-in-time solution satisfies
 $$\|D^{m}u(t)\|_{2}^{2}\leq C(t+1)^{-\frac{3}{2\alpha}(\frac{2}{p}-1)-\frac{m}{\alpha}}$$ for all $t>T_{0}$,
 where the constant $C$ depends on $m$, $\alpha$ and  $\|u_{0}\|_{L^{2}\cap L^{p}}$.
\end{theorem}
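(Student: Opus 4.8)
The plan is to run an induction on $m$ built on the Fourier-splitting method, taking Theorem \ref{the1} as the base ($m=0$) decay and using the smallness hypothesis both to invoke the global smooth solution of \cite{[Wu]} and to keep the nonlinear corrections subordinate. Write $\beta=\frac{3}{2\alpha}(\frac{2}{p}-1)$, so that Theorem \ref{the1} reads $\|u(t)\|_2^2\le C(t+1)^{-\beta}$ and the target is $\|D^m u(t)\|_2^2\le C(t+1)^{-\beta-\frac{m}{\alpha}}$. Assuming this rate for all orders $<m$, I would first apply $D^m$ to \eqref{eq-1}, pair with $D^m u$ in $L^2$, and use $\Div\,u=0$ to annihilate the pressure and the leading transport term $\int(u\cdot\nabla)D^m u\cdot D^m u\,dx=0$, obtaining
\be
\frac12\frac{d}{dt}\|D^m u\|_2^2+\|\Lambda^\alpha D^m u\|_2^2=-\int\big(D^m[(u\cdot\nabla)u]-(u\cdot\nabla)D^m u\big)\cdot D^m u\,dx.
\ee

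For the right-hand side I would use the Kato--Ponce/Moser commutator estimate to bound it by $C\|\nabla u\|_\infty\|D^m u\|_2^2$ plus products of strictly lower-order norms already governed by the induction hypothesis. The required temporal decay of $\|\nabla u\|_\infty$ and $\|u\|_\infty$ (the analogues, in the present range, of the bounds on $\|\nabla u\|_\infty$ and $\|u\|_\infty$ used in the proof of Theorem \ref{the2}) would come from Gagliardo--Nirenberg interpolation between the $L^2$ rate of Theorem \ref{the1} and the inductive higher-derivative rates. Smallness of the solution is precisely what turns these nonlinear contributions into genuinely subordinate perturbations, so that a Gronwall argument does not degrade the exponent.

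The core step is the Fourier splitting. Setting $S(t)=\{\xi:|\xi|^{2\alpha}\le \frac{k}{t+1}\}$ for a constant $k$ to be fixed large, one bounds the dissipation below by $\|\Lambda^\alpha D^m u\|_2^2\ge\frac{k}{2(t+1)}\big(\|D^m u\|_2^2-\int_{S(t)}|\xi|^{2m}|\hat u|^2\,d\xi\big)$, which converts the energy identity into
\be
\frac{d}{dt}\big[(t+1)^{k}\|D^m u\|_2^2\big]\le k(t+1)^{k-1}\int_{S(t)}|\xi|^{2m}|\hat u(\xi,t)|^2\,d\xi+(t+1)^{k}(\text{nonlinear}).
\ee
The low-frequency integral is then read off from the Duhamel formula $\hat u(\xi,t)=e^{-|\xi|^{2\alpha}t}\hat u_0(\xi)-\int_0^t e^{-|\xi|^{2\alpha}(t-s)}\widehat{P\,\Div(u\otimes u)}(\xi,s)\,ds$. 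The linear piece, via Hausdorff--Young ($u_0\in L^p\Rightarrow\hat u_0\in L^{p'}$) and H\"older over $S(t)\subset\{|\xi|\lesssim(t+1)^{-1/(2\alpha)}\}$, contributes exactly $(t+1)^{-\beta-\frac{m}{\alpha}}$; the nonlinear piece is handled through $|\widehat{u\otimes u}(\xi,s)|\le\|u(s)\|_2^2\le C(s+1)^{-\beta}$ and the factor $|\xi|$ from the divergence, splitting $\int_0^t=\int_0^{t/2}+\int_{t/2}^t$ to extract the needed decay. Integrating the differential inequality with $k>\beta+\frac{m}{\alpha}$ then yields $\|D^m u(t)\|_2^2\le C(t+1)^{-\beta-\frac{m}{\alpha}}$ for $t>T_0$.

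The step I expect to be the main obstacle is closing the induction against the coupling introduced by the $L^\infty$ norms: the interpolation estimate for $\|\nabla u\|_\infty$ involves $\|D^k u\|_2$ with $k>m$, so the scheme cannot be a clean downward induction on a single order but must be run simultaneously over a finite block of orders (or bootstrapped), with smallness guaranteeing the whole block improves together. A second delicate point is verifying that the nonlinear Duhamel contribution to the low-frequency integral never dominates the linear Hausdorff--Young term; this is where the constraint $\frac{1}{3-2\alpha}\le p<2$ inherited from Theorem \ref{the1} is used, since it keeps $\beta$ in the range for which the self-interaction $\int_0^t e^{-|\xi|^{2\alpha}(t-s)}(s+1)^{-\beta}\,ds$, weighted by $|\xi|^{2m+2}$ and integrated over $S(t)$, decays at least as fast as $(t+1)^{-\beta-\frac{m}{\alpha}}$.
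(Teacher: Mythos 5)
The step that fails is precisely the one you flagged as the ``main obstacle,'' and it fails for a structural reason, not a bookkeeping one. After the Kato--Ponce estimate you are left with
\[
\frac{d}{dt}\|D^m u\|_2^2+\|\Lambda^{m+\alpha}u\|_2^2\le C\|\nabla u\|_\infty\|D^m u\|_2^2 ,
\]
and for the Fourier-splitting scheme to preserve the exponent, the coefficient multiplying $\|D^m u\|_2^2$ must decay at least like $(t+1)^{-1}$: after splitting, the dissipation only supplies the factor $l/(t+1)$, so any coefficient decaying more slowly --- in particular the constant $C\epsilon$ you would get from smallness of $\|u\|_{H^s}$ --- eventually dominates $l/(t+1)$ and either produces an exponentially growing Gronwall factor or destroys the $(t+1)^{-1/\alpha}$ gain per derivative. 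Thus ``smallness makes the nonlinear contribution subordinate'' is not true in your formulation: smallness only helps when the nonlinearity is bounded by a small constant times the \emph{dissipation}, and $\|D^m u\|_2^2$ is not dissipation. Under the hypotheses of Theorem \ref{addthe}, $\beta=\frac{3}{2\alpha}(\frac{2}{p}-1)$ can be arbitrarily close to $0$ (take $p$ near $2$), and the only decay of $\|\nabla u\|_\infty$ available before the induction is closed is interpolation against Theorem \ref{addthe0}, namely $\|\nabla u\|_\infty\lesssim\|u\|_2^{1/6}\|\Lambda^3 u\|_2^{5/6}\lesssim (t+1)^{-\beta/2}$, far slower than $(t+1)^{-1}$ when $\beta<2$. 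This is exactly why the commutator/Gronwall route of Theorem \ref{the2} is restricted in the paper to small $p$ (Remark 1.1: $p\le\frac{6}{4\alpha+3}$, resp. $p\le\frac{6}{4\alpha+1}$) and cannot reach the range $\frac{1}{3-2\alpha}\le p<2$. Your proposed remedy --- a simultaneous block induction feeding the not-yet-proved rates at orders above $m$ into $\|\nabla u\|_\infty$ --- is circular as stated; no mechanism to start the bootstrap is given.

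The paper's proof avoids the nonlinearity altogether, and this is the idea your proposal is missing: since $1\le\alpha<\frac{5}{4}$ gives $\frac{5}{2}-2\alpha\in(0,\frac{1}{2})$, every positive integer $m$ satisfies $m\ge\frac{5}{2}-2\alpha$, so Wu's small-data theorem (Theorem \ref{addd0}) may be invoked at regularity level $s=m$, yielding \eqref{llm}, i.e. $\frac{d}{dt}\|\Lambda^m u\|_2^2\le -\|\Lambda^{m+\alpha}u\|_2^2$, with the nonlinear term already absorbed into the dissipation by smallness (the absorption works there because the nonlinearity is estimated by $\|u\|_{H^s}\|\Lambda^{\alpha}u\|_{H^s}^2$, both remaining factors being dissipation). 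Given \eqref{llm}, the low frequencies are handled not by Duhamel/Hausdorff--Young but by the pointwise identity $|\xi|^{2\alpha}|\widehat{\Lambda^m u}|^2=|\xi|^{2\alpha+2}|\widehat{\Lambda^{m-1}u}|^2$, i.e. \eqref{c3}, which converts the split inequality into \eqref{addd1}, coupling order $m$ only to order $m-1$; an upward induction on $m$ starting from Theorem \ref{the1} then gives the extra $(t+1)^{-1/\alpha}$ per derivative. Your Duhamel treatment of the low-frequency integral would also work (it is essentially Lemma \ref{Lem1} combined with Lemma \ref{lemb1}), so it is the energy inequality, not the low-frequency analysis, where your argument genuinely breaks; the repair is to replace the $\|\nabla u\|_\infty$--Gronwall step by Wu's absorption at the level of the full $H^m$ energy, which is what citing Theorem \ref{addd0} with $s=m$ accomplishes in one line.
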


\vspace{3mm}
\begin{remark}
The decay rates for higher order of derivatives of the solutions was
studied in \cite{[C-S]} for the classical Navier-Stokes equations
and in \cite{[Schonbek3]} for the Hall-magnetohydrodynamic
equations.
\end{remark}

The paper unfolds as follows: Section 2 is devoted to the proof of
Theorem \ref{the0} and Theorem \ref{the1} whereas Section 3 deals
with the proof of Theorem \ref{the2} and  Theorem \ref{addthe}. The
existence of weak solutions  is given  in the Appendix in the end of
 the paper.

\section{Proof of Theorem \ref{the0} and Theorem \ref{the1}}
\setcounter{section}{2}\setcounter{equation}{0}
In this section, Theorem \ref{the0} and Theorem \ref{the1} will be  proved.  We start with two key lemmas.

\begin{lemma}\label{lemb1}
Let $u$ be a smooth solution to system \eqref{eq-1} with initial data $u_{0}\in L^{p}(\R^{3})\cap L^{2}(\R^{3})$, $1\leq p<2$. Then there exists a  constant $C>0$ depending only on $\|u_{0}\|_{2}$ such that \begin{equation}\label{add4}
|\widehat{u}(\xi,t)|\leq C(|\widehat{u_{0}}(\xi)|+\frac{1}{|\xi|^{2\alpha-1}}).
\end{equation}
\end{lemma}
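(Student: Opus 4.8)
The plan is to pass to Fourier variables, eliminate the pressure by the Leray projection, and then read off the pointwise estimate directly from Duhamel's formula. First I would apply the Leray projection $\mathbb{P}$ onto divergence-free fields to the momentum equation; since $\mathbb{P}u=u$, $\mathbb{P}\Lambda^{2\alpha}u=\Lambda^{2\alpha}u$ (both $\mathbb{P}$ and $\Lambda^{2\alpha}$ being Fourier multipliers that commute) and $\mathbb{P}\ti p=0$, the pressure disappears and I am left with
\[
u_t+\Lambda^{2\alpha}u=-\mathbb{P}[(u\cdot\ti)u].
\]
Taking the Fourier transform and using $\widehat{\Lambda^{2\alpha}u}=|\xi|^{2\alpha}\widehat u$ turns this, for each fixed $\xi$, into a scalar linear ODE in $t$,
\[
\partial_t\widehat u+|\xi|^{2\alpha}\widehat u=-\widehat{\mathbb{P}[(u\cdot\ti)u]},
\]
which I solve with the integrating factor $e^{|\xi|^{2\alpha}t}$ to obtain the representation
\[
\widehat u(\xi,t)=e^{-|\xi|^{2\alpha}t}\widehat{u_0}(\xi)-\int_0^t e^{-|\xi|^{2\alpha}(t-s)}\,\widehat{\mathbb{P}[(u\cdot\ti)u]}(\xi,s)\,ds.
\]

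The decisive step is bounding the nonlinear term. Using $\Div u=0$, I would recast the convective term in divergence form, $(u\cdot\ti)u=\Div(u\otimes u)$, so that in Fourier space a single factor of $|\xi|$ is extracted: $|\widehat{(u\cdot\ti)u}(\xi)|\le C|\xi|\,|\widehat{u\otimes u}(\xi)|$. Since the symbol of $\mathbb{P}$ is a projection matrix of operator norm $1$, the projection does not enlarge the transform, whence $|\widehat{\mathbb{P}[(u\cdot\ti)u]}|\le C|\xi|\,|\widehat{u\otimes u}|$. Finally $|\widehat{u\otimes u}(\xi)|\le\|u\otimes u\|_1\le\|u(s)\|_2^2$ by Cauchy--Schwarz, and the energy inequality for the smooth solution gives $\|u(s)\|_2\le\|u_0\|_2$ for all $s\ge0$. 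Combining these three facts yields the uniform-in-$s$ bound
\[
|\widehat{\mathbb{P}[(u\cdot\ti)u]}(\xi,s)|\le C|\xi|\,\|u_0\|_2^2.
\]

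Inserting this into the Duhamel representation and using $\int_0^t e^{-|\xi|^{2\alpha}(t-s)}\,ds\le|\xi|^{-2\alpha}$, I arrive at
\[
|\widehat u(\xi,t)|\le|\widehat{u_0}(\xi)|+C\|u_0\|_2^2\,|\xi|\cdot|\xi|^{-2\alpha}=|\widehat{u_0}(\xi)|+C\|u_0\|_2^2\,\frac{1}{|\xi|^{2\alpha-1}},
\]
which is exactly \eqref{add4}, with the constant depending only on $\|u_0\|_2$. I expect the one point requiring care to be precisely this nonlinear estimate: one must see that the derivative in $\Div(u\otimes u)$ contributes the factor $|\xi|$ while the dissipative semigroup contributes $|\xi|^{-2\alpha}$, and that these combine to the advertised $|\xi|^{-(2\alpha-1)}$ behaviour near the origin. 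Keeping the constant dependent \emph{only} on $\|u_0\|_2$ hinges on feeding the energy inequality into the bound for $\widehat{u\otimes u}$ rather than invoking any $L^p$ control of $u_0$.
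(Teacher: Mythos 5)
Your proof is correct and follows essentially the same route as the paper: Fourier transform, Duhamel's formula with integrating factor $e^{|\xi|^{2\alpha}t}$, the bound $C|\xi|\,\|u(s)\|_{2}^{2}$ on the transformed nonlinearity via the $L^{1}\to L^{\infty}$ boundedness of the Fourier transform plus the energy inequality, and time-integration of the semigroup factor. The only cosmetic difference is that you eliminate the pressure up front with the Leray projector (whose symbol has norm $1$), whereas the paper keeps $\nabla p$ and estimates $\widehat{\nabla p}$ explicitly from $-\Delta p=\sum_{i,j}\partial_{x_i}\partial_{x_j}(u^{i}u^{j})$ --- these are the same computation in different packaging.
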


\begin{proof}
Taking the Fourier transform of the first equation of \eqref{eq-1} yields
\begin{equation}\label{b6}
\widehat{u}_{t}(\xi,t)+|\xi|^{2\alpha}\widehat{u}(\xi,t)=H(\xi,t),
\end{equation}
where $$H(\xi,t)=-\widehat{u\cdot\nabla u}(\xi,t)-\widehat{\nabla
p}(\xi,t).$$ Multiplying \eqref{b6} by  $e^{|\xi|^{2\alpha}t}$ gives
$$\frac{d}{dt}(e^{|\xi|^{2\alpha}t}\widehat{u}(\xi,t))=e^{|\xi|^{2\alpha}t}H(\xi,t).$$
Integrating with respect to  time  from $0$ to $t$, we have
\begin{equation}\label{bd}
\widehat{u}(\xi,t)=e^{-|\xi|^{2\alpha}t}\widehat{u_{0}}(\xi)
+\int_{0}^{t}e^{-|\xi|^{2\alpha}(t-s)}H(\xi,s)ds.
\end{equation}
Hence
\begin{equation}\label{add0}
|\widehat{u}(\xi,t)|\leq |\widehat{u_{0}}(\xi)|+
\int_{0}^{t}e^{-|\xi|^{2\alpha}(t-s)}|H(\xi,s)|ds.
\end{equation}

To complete the proof we need to establish an estimate for $H(\xi,s)$.
Taking the divergence operator on the first equation of \eqref{eq-1} yields
$$-\Delta p=\displaystyle{\sum_{i,j=1}^{3}}\frac{\partial^{2}}{\partial x_{i}\partial x_{j}}(u^{i}u^{j}).$$
Since the Fourier transform is a bounded map from $L^{1}$ into
$L^{\infty}$, it follows that
\begin{equation*}
\begin{split}
|\widehat{\nabla p}(\xi,t)|\leq& |\xi||\widehat{p}(\xi,t)|\\ \leq&
\displaystyle{\sum_{i,j=1}^{3}}\frac{|\xi_{i}\xi_{j}|}{|\xi|}|\widehat{u^{i}u^{j}}(\xi,t)|
\\ \leq&C|\xi|\|u(t)u(t)\|_{1}\\ \leq&C|\xi|\|u(t)\|_{2}^{2}.
\end{split}
\end{equation*}
Similarly, for the convection term,  using the divergence free
condition,  we have
\begin{equation*}
\begin{split}
|\widehat{u\cdot\nabla u}(\xi,t)|\leq& \displaystyle{\sum_{i}^{3}}|\xi||\widehat{u^{i}u}(\xi,t)|\\ \leq&
C|\xi|\|u(t)u(t)\|_{1}\\ \leq&C|\xi|\|u(t)\|_{2}^{2}.
\end{split}
\end{equation*}
Combing the above two estimates, we obtain
\begin{equation}\label{add6}|H(\xi,t)|\leq C|\xi| \|u(t)\|_{2}^{2}.
\end{equation}
Inserting \eqref{add6} into \eqref{add0} and using the boundedness
of the $L^{2}$ norm of the solution lead to
\begin{equation*}
\begin{split}
|\widehat{u}(\xi,t)|\leq&|\widehat{u_{0}}(\xi)| +\frac{ C}{|\xi|^{2\alpha-1}}\|u_{0}\|_{2}^{2}(1-e^{-|\xi|^{2\alpha}t})\\ \leq&C(|\widehat{u_{0}}(\xi)|+\frac{1}{|\xi|^{2\alpha-1}}).
\end{split}
\end{equation*}
The proof of the lemma is finished.
\end{proof}

\begin{lemma}\label{addlemma}
Let $u_{0}\in L^{p}(\R^{3})$ with $1\leq p\leq2$. Then
\begin{equation}\label{add3}
\int_{S(t)}|\widehat{u_{0}}(\xi)|^{2}d\xi\leq C(t+1)^{-\frac{3}{2\alpha}(\frac{2}{p}-1)},
\end{equation} where
\begin{equation}\label{b3}
S(t)=\{\xi\in \R^{3}: |\xi|\leq g(t)\},~~
g(t)=(\frac{\gamma}{t+1})^{\frac{1}{2\alpha}},
\end{equation} the constant C depends on $\gamma$ and the $L^{p}$ norm of $u_{0}$.
\end{lemma}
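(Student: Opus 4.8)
The plan is to combine the Hausdorff--Young inequality with H\"older's inequality on the ball $S(t)$, reducing the entire estimate to a computation of the volume $|S(t)|$.

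First I would record that, since $1\le p\le 2$, the Hausdorff--Young inequality gives $\widehat{u_0}\in L^{p'}(\R^3)$ with $\|\widehat{u_0}\|_{p'}\le C\|u_0\|_p$, where $p'=\frac{p}{p-1}\ge 2$ is the conjugate exponent (with $p'=\infty$ when $p=1$). This converts the hypothesis $u_0\in L^p$ into integrability information on $\widehat{u_0}$ in frequency space, which is precisely what the left-hand side requires.

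Next I would apply H\"older's inequality on $S(t)$ to the product $|\widehat{u_0}|^2\cdot 1$, pairing the exponent $p'/2\ge 1$ for the factor $|\widehat{u_0}|^2$ with its conjugate $q=\frac{p'}{p'-2}$ for the constant factor $1$. This yields
$$\int_{S(t)}|\widehat{u_0}(\xi)|^2\,d\xi\le\Big(\int_{S(t)}|\widehat{u_0}(\xi)|^{p'}\,d\xi\Big)^{2/p'}|S(t)|^{1-2/p'}\le C\|u_0\|_p^2\,|S(t)|^{1-2/p'},$$
where the last inequality uses Hausdorff--Young. Since $S(t)$ is the ball of radius $g(t)$ in $\R^3$, one has $|S(t)|=\tfrac{4}{3}\pi g(t)^3=C\gamma^{3/(2\alpha)}(t+1)^{-3/(2\alpha)}$, and the algebraic identity $1-\frac{2}{p'}=\frac{2}{p}-1$ then produces the stated decay rate $(t+1)^{-\frac{3}{2\alpha}(\frac{2}{p}-1)}$, with a constant depending on $\gamma$ and $\|u_0\|_p$.

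I do not expect a genuine obstacle here: the substance of the argument is the correct choice of H\"older exponents and the bookkeeping identity $1-\frac{2}{p'}=\frac{2}{p}-1$. The only care is at the endpoint $p=1$, where $p'=\infty$ and the H\"older step degenerates; there I would instead estimate $|\widehat{u_0}|$ pointwise by $\|\widehat{u_0}\|_\infty\le\|u_0\|_1$ and multiply by $|S(t)|$ directly, which gives the same rate. The endpoint $p=2$ is immediate from Plancherel, since then the exponent $\frac{2}{p}-1$ vanishes and the right-hand side reduces to the constant $\|u_0\|_2^2$.
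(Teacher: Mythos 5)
Your proposal is correct and follows essentially the same route as the paper: the paper also combines the $L^p\to L^{p'}$ boundedness of the Fourier transform (cited there as the Riesz theorem, i.e.\ Hausdorff--Young) with H\"older's inequality on $S(t)$ and the volume bound $|S(t)|=Cg^3(t)$, using the same exponent identity $1-\tfrac{2}{p'}=\tfrac{2}{p}-1$. Your explicit treatment of the endpoints $p=1$ (via $\|\widehat{u_0}\|_\infty\le\|u_0\|_1$) and $p=2$ (via Plancherel) is a minor refinement of a degenerate case the paper glosses over, not a different argument.
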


\begin{proof}
Denote $\mathcal{F}$  the Fourier transform. By Riesz theorem, if
$1\leq p\leq2$, the Fourier transform $\mathcal{F}: L^{p}\rightarrow
L^{q}$ is bounded, and
\begin{equation}\label{add1}
\|\mathcal{F}u_{0}\|_{q}\leq C\|u_{0}\|_{p},
\end{equation}
where $\frac{1}{p}+\frac{1}{q}=1$. Consequently, one has
\begin{equation}\label{add2}
\int_{S(t)}|\widehat{u_{0}}|^{2}d\xi\leq (\int_{S(t)}|\widehat{u_{0}}|^{q}d\xi)^{\frac{2}{q}}(\int_{S(t)}d\xi)^{1-\frac{2}{q}}.
\end{equation}
Thanks to \eqref{add1} and noting that the volume
$|S(t)|=Cg^{3}(t)$, we get
$$\int_{S(t)}|\widehat{u_{0}}(\xi)|^{2}d\xi\leq C(t+1)^{-\frac{3}{2\alpha}(\frac{2}{p}-1)}.$$
The proof of the lemma is finished.

\end{proof}

In the rest of this section, we first present a formal argument by
the Fourier splitting method (see \cite{[Schonbek1]}).

\begin{proof}[\textbf{Proof of Theorem \ref{the0}}]
By taking $L^2$-inner product on both sides of the first equation of
\eqref{eq-1} with $u$, we get
\begin{equation*}
\frac{d}{dt}\|u(t)\|^{2}_{2}=-2\|\Lambda^{\alpha}u(t)\|^{2}_{2}.
\end{equation*}
Applying the Plancherel theorem, one has
\begin{equation*}
\frac{d}{dt}\int_{\R^{3}}|\widehat{u}(\xi)|^{2}d\xi=-2\int_{\R^{3}}|\xi|^{2\alpha}|\widehat{u}(\xi)|^{2}d\xi.
\end{equation*}
Let
\begin{equation}\label{b3}
S(t)=\{\xi\in \R^{3}: |\xi|\leq g(t)\},  ~~ g(t)=(\frac{\gamma}{t+1})^{\frac{1}{2\alpha}},
\end{equation} where $\gamma$ is a constant to be determined.
Then
\begin{equation}\label{b4}
\begin{split}
\frac{d}{dt}\int_{\R^{3}}|\widehat{u}(\xi)|^{2}d\xi
\leq&-g^{2\alpha}(t)\int_{|\xi|\geq g(t)}|\widehat{u}(\xi)|^{2}d\xi-\int_{|\xi|\leq g(t)}|\xi|^{2\alpha}|\widehat{u}(\xi)|^{2}d\xi\\ \leq&-g^{2\alpha}(t)\int_{\R^{3}}|\widehat{u}(\xi)|^{2}d\xi+g^{2\alpha}(t)\int_{|\xi|\leq g(t)}|\widehat{u}(\xi)|^{2}d\xi.
\end{split}
\end{equation}
Multiplying \eqref{b4} by
$G(t)=e^{\int_{0}^{t}g^{2\alpha}(\tau)d\tau}$ yields
\begin{equation*}
\frac{d}{dt}(G(t)\|u(t)\|^{2}_{2})\leq g^{2\alpha}(t)G(t)\int_{|\xi|\leq g(t)}|\widehat{u}(\xi)|^{2}d\xi.
\end{equation*}
Note that $G(t)=(t+1)^{\gamma}$ by \eqref{b3}. It follows that
\begin{equation}\label{b5}
\frac{d}{dt}((t+1)^{\gamma}\|u(t)\|^{2}_{2})\leq \gamma(t+1)^{\gamma-1}\int_{|\xi|\leq g(t)}|\widehat{u}(\xi)|^{2}d\xi.
\end{equation}

To complete the proof we will use  Lemma \ref{lemb1} and
\ref{addlemma} to estimate the right hand of \eqref{b5}. Indeed, by
plugging \eqref{add4} into the right hand of \eqref{b5} and using
\eqref{add3}, we have
\begin{equation*}
\begin{split}
\frac{d}{dt}((t+1)^{\gamma}\|u(t)\|^{2}_{2})\leq& C(t+1)^{\gamma-1}\int_{|\xi|\leq g(t)}|\widehat{u_{0}}(\xi)|^{2}d\xi+C(t+1)^{\gamma-1}\int_{|\xi|\leq g(t)}\frac{1}{|\xi|^{2(2\alpha-1)}}d\xi
\\ \leq& C(t+1)^{\gamma-1-\frac{3}{2\alpha}(\frac{2}{p}-1)}+ C(t+1)^{\gamma-1-\frac{5-4\alpha}{2\alpha}}.
\end{split}
\end{equation*}
Integrating in time from 0 to $t$ yields
\begin{equation}\label{add5}
\begin{split}
\|u(t)\|_{2}^{2}\leq C((t+1)^{-\gamma}+(t+1)^{-\frac{3}{2\alpha}(\frac{2}{p}-1)}+
(t+1)^{-\frac{5-4\alpha}{2\alpha}}).
\end{split}
\end{equation}

When $0<\alpha\leq\frac{1}{2}$ and $p\geq1\geq\frac{3}{4-2\alpha}$,
we have
$\frac{3}{2\alpha}(\frac{2}{p}-1)\leq\frac{5-4\alpha}{2\alpha}$.
Hence, by choosing $\gamma=\frac{3}{2\alpha}$, we obtain
\begin{equation*}
\|u(t)\|_{2}^{2}\leq C(t+1)^{-\frac{3}{2\alpha}(\frac{2}{p}-1)}.
\end{equation*}

When $\frac{1}{2}<\alpha\leq1$, two cases will be considered
respectively.  In case of $1\leq p<\frac{3}{4-2\alpha}$, one has
$\frac{3}{2\alpha}(\frac{2}{p}-1)>\frac{5-4\alpha}{2\alpha}.$ Hence,
by choosing $\gamma=3$, we have
\begin{equation}\label{add7}
\|u(t)\|_{2}^{2}\leq C(t+1)^{-\frac{5-4\alpha}{2\alpha}}.
\end{equation}
In case of $\frac{3}{4-2\alpha}\leq p<2$, one has
$\frac{3}{2\alpha}(\frac{2}{p}-1)\leq\frac{5-4\alpha}{2\alpha}$.
Hence, by choosing $\gamma=\frac{3}{2\alpha}$, we have
\begin{equation}\label{l1}
\|u(t)\|_{2}^{2}\leq C(t+1)^{-\frac{3}{2\alpha}(\frac{2}{p}-1)}.
\end{equation}

Now we  improve the decay rate in \eqref{add7}. We will use
 \eqref{add7} to show that
$$|\widehat{u}(\xi,t)|\leq |\widehat{u_{0}}(\xi)|+C ~\text{for}~
\xi\in S(t).$$  Then a bootstrap-type argument will lead to  a
better decay rate. Using \eqref{add6} and \eqref{add7}, for
$\frac{1}{2}<\alpha\leq1 ~\text{and}~\alpha\neq\frac{5}{6}$,  we
have
\begin{equation}\label{v1}
\begin{split}
\int_{0}^{t}e^{-|\xi|^{2\alpha}(t-s)}|H(\xi,s)|ds\leq & C|\xi|\int_{0}^{t}(s+1)^{-\frac{5-4\alpha}{2\alpha}}ds
\\ \leq& C\frac{2\alpha}{6\alpha-5}|\xi|((t+1)^{\frac{6\alpha-5}{2\alpha}}-1)
\\ \leq&
C\frac{2\alpha}{6\alpha-5}(t+1)^{-\frac{1}{2\alpha}}((t+1)^{\frac{6\alpha-5}{2\alpha}}-1)
\\ \leq&C.
\end{split}
\end{equation} If $\alpha=\frac{5}{6}$,  we have
\begin{equation}\label{v0}
\begin{split}
\int_{0}^{t}e^{-|\xi|^{2\alpha}(t-s)}|H(\xi,s)|ds\leq & C|\xi|\int_{0}^{t}(s+1)^{-1}ds
\\ \leq&
C(t+1)^{-\frac{3}{5}}\ln(t+1)
\\ \leq&C.
\end{split}
\end{equation}
Hence by \eqref{bd}, \eqref{v1} and \eqref{v0}
$$|\widehat{u}(\xi,t)|\leq |\widehat{u_{0}}(\xi)|+C, ~\text{for}~ \xi\in S(t).$$
This, combined with  \eqref{b5}, yields
\begin{equation*}
\begin{split}
\frac{d}{dt}((t+1)^{\gamma}\|u(t)\|^{2}_{2})\leq& C(t+1)^{\gamma-1}\int_{|\xi|\leq g(t)}|\widehat{u_{0}}(\xi)|^{2}d\xi+C(t+1)^{\gamma-1}\int_{|\xi|\leq g(t)}d\xi
\\ \leq& C(t+1)^{\gamma-1-\frac{3}{2\alpha}(\frac{2}{p}-1)}+ C(t+1)^{\gamma-1-\frac{3}{2\alpha}}.
\end{split}
\end{equation*}
Integrating with respect to time yields
\begin{equation*}
\begin{split}
\|u(t)\|_{2}^{2}\leq& C((t+1)^{-\gamma}+(t+1)^{-\frac{3}{2\alpha}(\frac{2}{p}-1)}+
(t+1)^{-\frac{3}{2\alpha}})\\ \leq&C((t+1)^{-\gamma}+(t+1)^{-\frac{3}{2\alpha}(\frac{2}{p}-1)}).
\end{split}
\end{equation*}
By choosing $\gamma$ suitably large, we have
\begin{equation*}
\|u(t)\|_{2}^{2}\leq
C(t+1)^{-\frac{3}{2\alpha}(\frac{2}{p}-1)}.
\end{equation*}

\end{proof}

\begin{proof}[\textbf{Proof of Theorem \ref{the1}}]
Two cases will be considered respectively.

{\it Case I.} When $1\leq\alpha<\frac{5}{4}$ and
$\frac{3}{4-2\alpha}\leq p<2$, similar to the proof of  \eqref{b5},
\eqref{add5} and \eqref{l1}, one has
\begin{equation*}
\|u(t)\|_{2}^{2}\leq C(t+1)^{-\frac{3}{2\alpha}(\frac{2}{p}-1)}.
\end{equation*}

{\it Case II.} When $1\leq\alpha<\frac{5}{4}$ and
$1\leq\frac{1}{3-2\alpha}\leq p<\frac{3}{4-2\alpha}$, similar to the
proof of \eqref{b5}, \eqref{add5} and \eqref{add7}, one has
\begin{equation}\label{l} \|u(t)\|_{2}^{2}\leq
C(t+1)^{-\frac{5-4\alpha}{2\alpha}}.
\end{equation}
It follows from \eqref{add6} and \eqref{l} that
\begin{equation}\label{add9}
\begin{split}
\int_{0}^{t}e^{-|\xi|^{2\alpha}(t-s)}|H(\xi,s)|ds\leq & C|\xi|\int_{0}^{t}(s+1)^{-\frac{5-4\alpha}{2\alpha}}ds
\\ \leq& C\frac{2\alpha}{6\alpha-5}|\xi|((t+1)^{\frac{6\alpha-5}{2\alpha}}-1)
\\ \leq&
C\frac{2\alpha}{6\alpha-5}(t+1)^{-\frac{1}{2\alpha}}((t+1)^{\frac{6\alpha-5}{2\alpha}}-1)
\\ \leq&C(t+1)^{\frac{3\alpha-3}{\alpha}}.
\end{split}
\end{equation}
Thanks to \eqref{bd}, we have $|\widehat{u}(\xi,t)|\leq
C(|\widehat{u_{0}}(\xi)|+(t+1)^{\frac{3\alpha-3}{\alpha}})$.
Applying \eqref{b5} again leads to
\begin{equation*}
\begin{split}
\frac{d}{dt}((t+1)^{\gamma}\|u(t)\|^{2}_{2})\leq& C(t+1)^{\gamma-1-\frac{3}{2\alpha}(\frac{2}{p}-1)}+ C(t+1)^{\gamma-1-\frac{3}{2\alpha}+\frac{6\alpha-6}{\alpha}}.
\end{split}
\end{equation*}
Integrating with respect to time and choosing $\gamma$ suitably
yield
\begin{equation*}
\|u(t)\|_{2}^{2}\leq
C(t+1)^{-\frac{3}{2\alpha}(\frac{2}{p}-1)}.
\end{equation*}
The proof of  Theorem \ref{the1} is finished.

\end{proof}

\begin{remark}
The proof of Theorems \ref{the0} and \ref{the1} is formal and we
assume that all the calculus   in the proof make sense. To make it
more rigorous, we apply the a prior estimates to the approximate
solutions constructed in the Appendix.  Let us recall that $u_{N}$
is a solution of the approximate equation
\begin{equation*}
\left\{\ba
&\partial _{t}u_{N}+PJ_{N}(u_{N}\cdot \nabla u_{N})+ \Lambda^{2\alpha}u_{N}=0,\\
&\Div~ u_{N}=0,\\
&u_{N}(x,0)=J_{N}u_{0},\\  \ea\ \right.
\end{equation*}
where $J_{N}$ is the spectral cutoff defined by $$\widehat{J_{N}f}(\xi)=1_{[0, N]}(|\xi|)\widehat{f}(\xi)$$ and $P$ is the Leray projector over divergence-free vector-fields.

\end{remark}

It is shown that the $u_{N}$ converges strongly in $L^{2}(0,T;
L^{2}_{loc}(\R^{3}))$ to a weak solution of  the generalized
three-dimensional Navier-Stokes equation \eqref{eq-1} in the
Appendix. Hence the $L^{2}$ decay of  $u_{N}$ will imply the $L^{2}$
decay of the weak solution of \eqref{eq-1}.

\section{Proof of Theorem \ref{the2} and Theorem \ref{addthe}}
\setcounter{section}{3}\setcounter{equation}{0}

In this section, we will give the proof of Theorem \ref{the2} and
Theorem \ref{addthe}. Before that,  we  recall the following result
established in \cite{[Wu]}.
\begin{theorem}\label{addd0}
Let $s\geq\frac{5}{2}-2\alpha$ with $0<\alpha<\frac{5}{4}$. Suppose
that $u_{0}\in H^{s}(\R^{3})$ with $\Div~ u_{0}=0$ and there exists
a constant $\epsilon$ such that $\|u_{0}\|_{H^{s}}\leq \epsilon$.
Then there exists a unique solution $u\in L^{\infty}(0,+\infty;
H^{s})$ satisfying
\begin{equation}\label{f}
\frac{d}{dt}\|u\|^{2}_{H^{s}}\leq -\|\Lambda^{\alpha}u\|^{2}_{H^{s}}.
\end{equation}
\end{theorem}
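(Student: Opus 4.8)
The plan is to read Theorem \ref{addd0} as a small-data global well-posedness statement and prove it from a single self-improving energy estimate in $H^s$, combined with a standard local existence/continuation scheme. First I would construct a local-in-time solution $u\in C([0,T^\ast);H^s)$ by the spectral-cutoff (Friedrichs/Galerkin) scheme already used in the Appendix for the weak solutions: the cutoff $J_N$ makes the nonlinearity Lipschitz on $H^s$, so an ODE argument gives $u_N$ on a short interval, the a priori bound below furnishes uniform $H^s$ control independent of $N$, and passing to the limit yields a local solution; uniqueness follows from an $L^2$ estimate on the difference of two solutions. The point of the theorem is then that the $H^s$ norm never grows, which forces $T^\ast=+\infty$.

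The heart of the matter is the differential inequality. Applying $\Lambda^s$ to the momentum equation of \eqref{eq-1}, pairing with $\Lambda^s u$ in $L^2$, and using $\Div u=0$ (which annihilates the pressure term and the top-order transport term), I obtain
\[ \frac{1}{2}\frac{d}{dt}\|u\|_{H^s}^2 = -\|\Lambda^\alpha u\|_{H^s}^2 - \langle \Lambda^s(u\cdot\nabla u),\Lambda^s u\rangle , \]
where the remaining nonlinear contribution is only the genuinely off-diagonal part (the $L^2$ energy piece contributes no nonlinear term by divergence-freeness). Everything then reduces to the trilinear estimate
\[ |\langle \Lambda^s(u\cdot\nabla u),\Lambda^s u\rangle| \leq C\|u\|_{H^s}\|\Lambda^\alpha u\|_{H^s}^2 . \]

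To prove this I would use a Littlewood--Paley/paraproduct decomposition (for $\alpha\geq\tfrac12$ a Kato--Ponce commutator estimate suffices), distributing the $2(s+\alpha)$ total available derivatives so that two of the three copies of $u$ carry the dissipative norm $\|\Lambda^{s+\alpha}u\|_2$ and the third is measured in the scaling-critical norm $\|u\|_{\dot H^{s_c}}$ with $s_c=\tfrac{5}{2}-2\alpha$, the remaining factor being controlled via the $3$D Sobolev embedding. A derivative count confirms that the exponents balance exactly at this threshold: under $u\mapsto u(\lambda\,\cdot)$ the left side scales like $\lambda^{2s-2}$ and the candidate right side $\|u\|_{\dot H^{s}}\|u\|_{\dot H^{s+\alpha}}^2$ like $\lambda^{3s+2\alpha-9/2}$, and these agree precisely when $s=\tfrac{5}{2}-2\alpha$. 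For $s\geq\tfrac{5}{2}-2\alpha$ one finally replaces $\|u\|_{\dot H^{s_c}}$ by $\|u\|_{H^s}$, which is legitimate because $0\leq s_c\leq s$ (here $s_c\geq0$ uses $\alpha\leq\tfrac54$), so that $\|u\|_{\dot H^{s_c}}\leq C\|u\|_{H^s}$.

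With the trilinear estimate in hand the argument closes by continuity. Choosing $\epsilon$ so small that $C\epsilon\leq\tfrac12$, as long as $\|u(t)\|_{H^s}\leq\epsilon$ one has
\[ \frac{1}{2}\frac{d}{dt}\|u\|_{H^s}^2 \leq (-1+C\|u\|_{H^s})\|\Lambda^\alpha u\|_{H^s}^2 \leq -\frac{1}{2}\|\Lambda^\alpha u\|_{H^s}^2 \leq 0 , \]
which simultaneously yields the stated inequality \eqref{f} (after discarding the favorable constant) and shows that $\|u(t)\|_{H^s}$ is nonincreasing, hence stays below $\epsilon$ for all time; a standard bootstrap then rules out blow-up of the $H^s$ norm and extends the local solution to $u\in L^\infty(0,+\infty;H^s)$. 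I expect the main obstacle to be the trilinear estimate at this \emph{scaling-critical} regularity: one cannot afford to bound $\|\nabla u\|_\infty$ by Sobolev embedding, so the $2\alpha$ dissipative derivatives must be split carefully among the three factors so that the smallness is carried by a scale-invariant quantity, and checking that the required H\"older exponents remain admissible for the whole range $0<\alpha<\tfrac54$, $s\geq\tfrac52-2\alpha$ is the delicate bookkeeping.
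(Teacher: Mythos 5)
First, a point of comparison: the paper offers \emph{no} proof of Theorem \ref{addd0} --- it is quoted as a result ``established in \cite{[Wu]}'' and used as a black box --- so your proposal cannot be matched against an in-paper argument, only judged on its own merits. On those merits, your architecture is the standard and correct one (Friedrichs-type local existence, an $H^{s}$ energy inequality with the nonlinearity absorbed into the dissipation for small data, continuation), the energy identity is right, and the scaling computation pinning the critical exponent $s_{c}=\frac{5}{2}-2\alpha$ is exactly right. Moreover, for $\frac{1}{2}<\alpha<\frac{5}{4}$ the route you describe does close: after integration by parts, fractional Leibniz and Sobolev embedding give
\[
\|\Lambda^{s+1-\alpha}(u\otimes u)\|_{2}\leq C\|\Lambda^{s+1-\alpha}u\|_{\frac{6}{5-4\alpha}}\|u\|_{\frac{3}{2\alpha-1}}\leq C\|\Lambda^{s+\alpha}u\|_{2}\|u\|_{\dot{H}^{s_{c}}},
\]
which is your trilinear estimate.

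The genuine gap is the range $0<\alpha\leq\frac{1}{2}$, which the theorem covers and which the paper actually needs (Theorem \ref{the2} is stated for $0<\alpha\leq1$). There the derivative distribution you prescribe cannot be realized by Leibniz/H\"older/Sobolev after integrating by parts: in the low--high (transport) interaction of $\Lambda^{s+1-\alpha}(u\otimes u)$, essentially all $s+1-\alpha$ derivatives land on the high-frequency factor, and since $s+1-\alpha>s+\alpha$ for $\alpha<\frac12$, no $L^{p}$ norm of $\Lambda^{s+1-\alpha}u$ with $p\geq2$ is controlled by $\|u\|_{H^{s}}+\|\Lambda^{s+\alpha}u\|_{2}$, no matter how large $s$ is; the fallback $\|\nabla u\|_{\infty}$ is likewise unavailable at critical regularity because $s+\alpha=\frac{5}{2}-\alpha<\frac{5}{2}$. (Your parenthetical ``Kato--Ponce suffices for $\alpha\geq\frac12$'' also fails at $\alpha=\frac12$ exactly, where the dual exponent $\frac{3}{2\alpha-1}$ degenerates to $\infty$.) The missing idea is to never give up the commutator structure: since $\langle u\cdot\nabla\Lambda^{s}u,\Lambda^{s}u\rangle=0$ by $\Div~u=0$, the nonlinear term equals $\langle\Lambda^{s}(u\cdot\nabla u)-u\cdot\nabla\Lambda^{s}u,\Lambda^{s}u\rangle$, and the paper's own Lemma \ref{commutor}, estimate \eqref{com2}, applied with $p_{1}=q_{2}=\frac{6}{3-2\alpha}$ and $p_{2}=q_{1}=\frac{3}{\alpha}$, gives
\[
\|\Lambda^{s}(u\cdot\nabla u)-u\cdot\nabla\Lambda^{s}u\|_{2}\leq C\|\Lambda^{s}u\|_{\frac{6}{3-2\alpha}}\|\nabla u\|_{\frac{3}{\alpha}}\leq C\|\Lambda^{s+\alpha}u\|_{2}\|u\|_{\dot{H}^{\frac{5}{2}-\alpha}}
\]
by Sobolev embedding. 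Since $s_{c}\leq\frac{5}{2}-\alpha\leq s+\alpha$ and $s_{c}\leq s\leq s+\alpha$, interpolating both $\|u\|_{\dot{H}^{5/2-\alpha}}$ and the pairing factor $\|\Lambda^{s}u\|_{2}$ between $\dot{H}^{s_{c}}$ and $\dot{H}^{s+\alpha}$ yields exactly
\[
|\langle\Lambda^{s}(u\cdot\nabla u),\Lambda^{s}u\rangle|\leq C\|u\|_{\dot{H}^{s_{c}}}\|u\|_{\dot{H}^{s+\alpha}}^{2}\leq C\|u\|_{H^{s}}\|\Lambda^{\alpha}u\|_{H^{s}}^{2},
\]
uniformly for all $0<\alpha<\frac{5}{4}$ and $s\geq s_{c}$. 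With this replacement your smallness/absorption argument and the conclusion \eqref{f} go through verbatim, and the case split at $\alpha=\frac{1}{2}$ becomes unnecessary.
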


\begin{lemma}\label{Lem1}
Let $1\leq p<2$. Suppose that $u_{0}\in L^{p}(\R^3)\cap L^{2}(\R^3)$
with $\Div~ u_{0}=0$. Then, for any $|\xi|\leq1$ and $j\geq0$, we
have
\begin{equation}\label{e1}
|\widehat{\Lambda^{j}u}(\xi,t)|\leq C(|\widehat{u_{0}}(\xi)|+\frac{1}{|\xi|^{2\alpha-1}}), \end{equation} where $C$ depends only on $\|u_{0}\|_{L^{p}\cap L^{2}}$.
\end{lemma}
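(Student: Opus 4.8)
The plan is to prove a frequency-localized estimate for arbitrary fractional derivatives $\Lambda^j u$ that generalizes Lemma \ref{lemb1} from the case $j=0$ to all $j\geq 0$, but restricted to the low-frequency region $|\xi|\leq 1$. First I would follow the Duhamel approach of Lemma \ref{lemb1}: taking the Fourier transform of the first equation of \eqref{eq-1}, multiplying by the integrating factor $|\xi|^{j}$, and using the representation formula \eqref{bd} to write
\begin{equation*}
\widehat{\Lambda^{j}u}(\xi,t)=|\xi|^{j}e^{-|\xi|^{2\alpha}t}\widehat{u_{0}}(\xi)+|\xi|^{j}\int_{0}^{t}e^{-|\xi|^{2\alpha}(t-s)}H(\xi,s)\,ds.
\end{equation*}
The first term is controlled by $|\xi|^{j}|\widehat{u_{0}}(\xi)|\leq|\widehat{u_{0}}(\xi)|$ since $|\xi|\leq 1$ and $j\geq 0$.

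For the Duhamel term I would reuse the nonlinear estimate \eqref{add6}, namely $|H(\xi,s)|\leq C|\xi|\,\|u(s)\|_{2}^{2}$, together with the fact that under the smallness hypothesis the $L^{2}$ norm is uniformly bounded (indeed Theorem \ref{addd0} furnishes $\|u(s)\|_{2}\leq\|u(s)\|_{H^{s}}\leq\epsilon$). This reduces the integral to
\begin{equation*}
|\xi|^{j}\int_{0}^{t}e^{-|\xi|^{2\alpha}(t-s)}|H(\xi,s)|\,ds\leq C|\xi|^{j+1}\int_{0}^{t}e^{-|\xi|^{2\alpha}(t-s)}\,ds\leq C|\xi|^{j+1}\frac{1-e^{-|\xi|^{2\alpha}t}}{|\xi|^{2\alpha}}\leq C|\xi|^{j+1-2\alpha}.
\end{equation*}
Since $|\xi|\leq 1$ and $j\geq 0$, we have $j+1-2\alpha\geq 1-2\alpha=-(2\alpha-1)$, so $|\xi|^{j+1-2\alpha}\leq|\xi|^{-(2\alpha-1)}=\frac{1}{|\xi|^{2\alpha-1}}$ exactly when $2\alpha-1\geq 0$; combining the two terms then gives the asserted bound \eqref{e1}.

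The main obstacle I anticipate is the interplay between $j$, $\alpha$, and the sign of the exponent $j+1-2\alpha$. When $j+1-2\alpha\geq 0$ the Duhamel contribution is in fact bounded (not merely $\frac{1}{|\xi|^{2\alpha-1}}$), so the stated estimate is comfortably satisfied; the genuinely binding case is small $j$ with $\alpha$ close to its upper range, where one must verify that $j+1-2\alpha\geq-(2\alpha-1)$ holds for every $j\geq 0$, which it does trivially. A second, more delicate point is justifying the uniform bound $\|u(s)\|_{2}\leq C$ that underlies \eqref{add6}: here I would invoke Theorem \ref{addd0}, whose energy inequality \eqref{f} guarantees $\|u\|_{H^{s}}$ is nonincreasing and hence $\|u(s)\|_{2}$ stays below $\epsilon$ for all time, so the constant $C$ in \eqref{e1} depends only on $\|u_{0}\|_{L^{p}\cap L^{2}}$ as claimed. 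Finally I would remark that the restriction $|\xi|\leq 1$ is what allows the harmless replacement $|\xi|^{j}\leq 1$; without it the high-frequency factor $|\xi|^{j}$ could not be absorbed, but since this lemma feeds the Fourier-splitting estimates over the low-frequency ball $S(t)\subset\{|\xi|\leq 1\}$, the restriction is precisely what is needed downstream.
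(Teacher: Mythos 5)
Your proposal is correct and follows essentially the same route as the paper: the entire content of the lemma is the observation that $|\widehat{\Lambda^{j}u}(\xi,t)|=|\xi|^{j}|\widehat{u}(\xi,t)|\leq|\widehat{u}(\xi,t)|$ for $|\xi|\leq1$, $j\geq0$, after which the bound of Lemma \ref{lemb1} applies verbatim. The paper simply cites Lemma \ref{lemb1} at that point rather than re-running the Duhamel computation with the factor $|\xi|^{j}$ carried along, and it uses the plain energy inequality $\|u(t)\|_{2}\leq\|u_{0}\|_{2}$ for the uniform $L^{2}$ bound, so your appeal to the smallness hypothesis of Theorem \ref{addd0} is unnecessary (and your parenthetical ``exactly when $2\alpha-1\geq0$'' is a harmless slip: $|\xi|^{j+1-2\alpha}\leq|\xi|^{1-2\alpha}$ holds for all $|\xi|\leq1$, $j\geq0$ regardless of the sign of $2\alpha-1$).
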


\begin{proof}
Since $|\xi|\leq1$, we have
\begin{equation*}
|\widehat{\Lambda^{j}u}(\xi,t)|\leq
|\xi|^{j}|\widehat{u}(\xi,t)|\leq |\widehat{u}(\xi,t)|.
\end{equation*}
Using Lemma \ref{lemb1} leads to the desired \eqref{e1}.
\end{proof}

The following are  decay estimates for high order derivatives of the
smooth solution.
\begin{theorem}\label{10-22-2}
Let $0<\alpha\leq 1$.  Suppose that $u_{0}\in L^{p}(\R^{3})\cap
H^{s}(\R^{3})$ with $1\leq p<2$ and  $s\geq\frac{5}{2}-2\alpha$,
satisfying $\Div~ u_{0}=0$. Then, there exists a $T_{0}>0$ such that
for any $t>T_{0}$ the global-in-time solution established in Theorem
\ref{addd0} satisfies
\begin{equation}\label{f1}
\|u(t)\|_{H^{s}}^{2}\leq C(t+1)^{-\frac{3}{2\alpha}(\frac{2}{p}-1)},
\end{equation}
 where  $C$ depends on $\alpha$ and $\|u_{0}\|_{H^{s}\cap L^{p}}$.
\end{theorem}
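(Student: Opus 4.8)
The plan is to transport the Fourier-splitting argument of Section~2 from the energy equality to the $H^s$ energy inequality \eqref{f}, using that on the shrinking low-frequency ball the Sobolev weight is uniformly bounded once $t$ is large. Writing $\|u\|_{H^s}^2=\int_{\R^3}(1+|\xi|^2)^s|\widehat u|^2\,d\xi$ and $\|\Lambda^\alpha u\|_{H^s}^2=\int_{\R^3}(1+|\xi|^2)^s|\xi|^{2\alpha}|\widehat u|^2\,d\xi$, I first fix $T_0$ so that $g(T_0)\le1$, where $g$ and $S(t)$ are as in \eqref{b3}; then for $t>T_0$ one has $(1+|\xi|^2)^s\le 2^s$ throughout $S(t)$. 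On $|\xi|\ge g(t)$ we bound $|\xi|^{2\alpha}\ge g^{2\alpha}(t)$ and split the dissipation term in \eqref{f} exactly as in \eqref{b4}; multiplying by $(t+1)^\gamma$ as in \eqref{b5} gives
\[
\frac{d}{dt}\big((t+1)^\gamma\|u\|_{H^s}^2\big)\le C(t+1)^{\gamma-1}\int_{S(t)}|\widehat u(\xi,t)|^2\,d\xi,
\]
so the whole problem is reduced to the very same low-frequency quantity $\int_{S(t)}|\widehat u|^2$ already treated in Section~2.

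I would then insert the pointwise bound of Lemma \ref{Lem1} (applicable because $|\xi|\le g(t)\le1$), writing $\int_{S(t)}|\widehat u|^2\le C\int_{S(t)}|\widehat{u_0}|^2\,d\xi+C\int_{S(t)}|\xi|^{-2(2\alpha-1)}\,d\xi$. The first integral is controlled by Lemma \ref{addlemma} and contributes the desired rate $(t+1)^{-\frac{3}{2\alpha}(\frac{2}{p}-1)}$; the second is an elementary radial integral, convergent since $\alpha\le1<\frac54$, equal to $C g(t)^{5-4\alpha}=C(t+1)^{-\frac{5-4\alpha}{2\alpha}}$. Integrating in time from $T_0$ reproduces, as in \eqref{add5}, the three-term estimate $\|u(t)\|_{H^s}^2\le C\big((t+1)^{-\gamma}+(t+1)^{-\frac{3}{2\alpha}(\frac{2}{p}-1)}+(t+1)^{-\frac{5-4\alpha}{2\alpha}}\big)$. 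When $\frac{3}{4-2\alpha}\le p<2$ the exponent $\frac{5-4\alpha}{2\alpha}$ dominates $\frac{3}{2\alpha}(\frac{2}{p}-1)$, and choosing $\gamma$ large yields \eqref{f1} directly.

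The main obstacle is the remaining range $1\le p<\frac{3}{4-2\alpha}$, where the singular frequency factor $|\xi|^{-(2\alpha-1)}$ only produces the slower rate $(t+1)^{-\frac{5-4\alpha}{2\alpha}}$. The remedy is a bootstrap of the type used for Theorem \ref{the0}: now that the \emph{sharp} $L^2$ decay $\|u(s)\|_2^2\le C(s+1)^{-\frac{3}{2\alpha}(\frac{2}{p}-1)}$ is available from Theorems \ref{the0}--\ref{the1}, I reinsert it into the Duhamel formula \eqref{bd} via the nonlinear bound \eqref{add6}, mimicking \eqref{v1}, \eqref{v0}, \eqref{add9} to replace the crude $|\xi|^{-(2\alpha-1)}$ estimate by a genuinely decaying one for $\widehat u$ on $S(t)$, and then re-run the splitting above. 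The delicate point, exactly as in Section~2, is to track the convergence of the time integrals $\int_0^t(s+1)^{-\frac{3}{2\alpha}(\frac{2}{p}-1)}\,ds$ across the borderline exponents (compare the logarithmic case in \eqref{v0}); a short computation shows that, because $\alpha<\frac54$, one pass already drives the nonlinear contribution to $\int_{S(t)}|\widehat u|^2$ below $(t+1)^{-\frac{3}{2\alpha}(\frac{2}{p}-1)}$, after which taking $\gamma$ suitably large removes the $(t+1)^{-\gamma}$ term and establishes \eqref{f1} for all $t>T_0$.
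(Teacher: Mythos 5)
Your proposal is correct and follows essentially the same route as the paper: Fourier splitting applied to the $H^{s}$ energy inequality \eqref{f}, with the low-frequency ball controlled through Lemma \ref{Lem1} (your observation that the Sobolev weight is bounded on $S(t)$ once $g(t)\le 1$ is exactly the content of that lemma) and Lemma \ref{addlemma}, followed by the Section~2 bootstrap in the range $1\le p<\frac{3}{4-2\alpha}$. The only cosmetic difference is that your bootstrap feeds the already-established sharp $L^{2}$ decay of Theorems \ref{the0}--\ref{the1} into the Duhamel formula \eqref{bd}, whereas the paper's reference ``similar to \eqref{1} and \eqref{2}'' re-runs the intermediate-rate bootstrap of Section~2; both close in one pass.
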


\begin{theorem}\label{addthe0}
Let $1\leq\alpha<\frac{5}{4}$.  Suppose that $u_{0}\in
L^{p}(\R^{3})\cap H^{s}(\R^{3})$ with  $s\geq\frac{5}{2}-2\alpha$
and $\frac{1}{3-2\alpha}\leq p<2$, satisfying $\Div~ u_{0}=0$. Then,
there exists a $T_{0}>0$ such that for any $t>T_{0}$ the
global-in-time solution established in Theorem \ref{addd0} satisfies
\begin{equation}\label{f1}
\|u(t)\|_{H^{s}}^{2}\leq C(t+1)^{-\frac{3}{2\alpha}(\frac{2}{p}-1)},
\end{equation}
 where  $C$ depends on $\alpha$ and $\|u_{0}\|_{H^{s}\cap L^{p}}$.
\end{theorem}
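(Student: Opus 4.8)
The plan is to run the Fourier-splitting argument of Theorem \ref{the1} almost verbatim, but with the $L^{2}$ energy identity replaced by the $H^{s}$ energy inequality \eqref{f} supplied by Theorem \ref{addd0}. First I would apply Plancherel to \eqref{f}, rewriting it as $\frac{d}{dt}\int_{\R^{3}}(1+|\xi|^{2})^{s}|\widehat{u}|^{2}\,d\xi\leq-\int_{\R^{3}}|\xi|^{2\alpha}(1+|\xi|^{2})^{s}|\widehat{u}|^{2}\,d\xi$, and split the frequency space at the ball $S(t)$ of radius $g(t)=(\gamma/(t+1))^{1/2\alpha}$ defined in \eqref{b3}. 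On $|\xi|\geq g(t)$ one has $|\xi|^{2\alpha}\geq g^{2\alpha}(t)=\gamma/(t+1)$, so after multiplying by the integrating factor $(t+1)^{\gamma}$ I reach the $H^{s}$-analogue of \eqref{b5},
\[ \tfrac{d}{dt}\big((t+1)^{\gamma}\|u\|_{H^{s}}^{2}\big)\leq\gamma(t+1)^{\gamma-1}\int_{S(t)}(1+|\xi|^{2})^{s}|\widehat{u}|^{2}\,d\xi. \]
The point that makes the $H^{s}$-rate collapse onto the $L^{2}$-rate is that I only need to work for $t>T_{0}$, where $T_{0}$ is chosen so that $g(T_{0})\leq1$; then $|\xi|\leq g(t)\leq1$ on $S(t)$, the weight obeys $(1+|\xi|^{2})^{s}\leq 2^{s}$ there, and the low-frequency integral is dominated by $C\int_{S(t)}|\widehat{u}|^{2}\,d\xi$, exactly the quantity already controlled in the proof of Theorem \ref{the1}. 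This is where $s\geq\frac{5}{2}-2\alpha$ (so that Theorem \ref{addd0} and \eqref{f} apply) and the restriction to large $t$ enter.

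Next I would use the bounded weight on $S(t)$ together with the pointwise bound \eqref{add4} of Lemma \ref{lemb1} (equivalently Lemma \ref{Lem1}) and Lemma \ref{addlemma}. This bounds the right-hand side by $C(t+1)^{\gamma-1-\frac{3}{2\alpha}(\frac{2}{p}-1)}+C(t+1)^{\gamma-1}\int_{S(t)}|\xi|^{-2(2\alpha-1)}\,d\xi$; since $\alpha<\frac{5}{4}$ the last integral converges and equals $Cg(t)^{5-4\alpha}\sim(t+1)^{-\frac{5-4\alpha}{2\alpha}}$. Integrating in time gives, exactly as in \eqref{add5}, $\|u\|_{H^{s}}^{2}\leq C(t+1)^{-\gamma}+C(t+1)^{-\frac{3}{2\alpha}(\frac{2}{p}-1)}+C(t+1)^{-\frac{5-4\alpha}{2\alpha}}$, and I would split into the two ranges of Theorem \ref{the1}. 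For $\frac{3}{4-2\alpha}\leq p<2$ one has $\frac{3}{2\alpha}(\frac{2}{p}-1)\leq\frac{5-4\alpha}{2\alpha}$, so choosing $\gamma$ large yields \eqref{f1} at once. For $\frac{1}{3-2\alpha}\leq p<\frac{3}{4-2\alpha}$ the term $(t+1)^{-\frac{5-4\alpha}{2\alpha}}$ dominates, and I obtain only the preliminary estimate $\|u\|_{H^{s}}^{2}\leq C(t+1)^{-\frac{5-4\alpha}{2\alpha}}$.

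The heart of the argument, and the step I expect to be the main obstacle, is the bootstrap that upgrades this preliminary rate in the second range. Since $\|u\|_{2}\leq\|u\|_{H^{s}}$, the preliminary estimate feeds the nonlinear bound \eqref{add6} and then the Duhamel formula \eqref{bd}: estimating $\int_{0}^{t}e^{-|\xi|^{2\alpha}(t-s)}|H(\xi,s)|\,ds\leq C|\xi|\int_{0}^{t}(s+1)^{-\frac{5-4\alpha}{2\alpha}}\,ds$ and using $|\xi|\leq g(t)$ on $S(t)$ exactly as in \eqref{add9} yields the sharpened pointwise bound $|\widehat{u}(\xi,t)|\leq C(|\widehat{u_{0}}(\xi)|+(t+1)^{\frac{3\alpha-3}{\alpha}})$ for $\xi\in S(t)$. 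Reinserting this into the $H^{s}$-splitting inequality replaces $\int_{S(t)}|\xi|^{-2(2\alpha-1)}\,d\xi$ by $(t+1)^{\frac{6\alpha-6}{\alpha}}\int_{S(t)}d\xi\sim(t+1)^{\frac{12\alpha-15}{2\alpha}}$, and after integration the delicate point is to verify that $\frac{12\alpha-15}{2\alpha}\leq-\frac{3}{2\alpha}(\frac{2}{p}-1)$. This inequality is equivalent to $\frac{3}{2\alpha}(\frac{2}{p}-1)\leq\frac{15-12\alpha}{2\alpha}$, i.e. to $p\geq\frac{1}{3-2\alpha}$, so it is precisely the lower bound on $p$ in the hypothesis that makes a \emph{single} bootstrap iteration enough to reach the optimal rate; checking this arithmetic is the crux. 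Choosing $\gamma$ suitably large to absorb the $(t+1)^{-\gamma}$ term then completes the proof. (Alternatively, one could feed the already-optimal $L^{2}$ decay of Theorem \ref{the1} directly into \eqref{add6}, which removes the need to re-derive the constraint in the bootstrap step but leans on Theorem \ref{the1} more heavily.)
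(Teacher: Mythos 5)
Your proposal is correct and follows essentially the same route as the paper's proof: Fourier splitting applied to the $H^{s}$ energy inequality \eqref{f}, with the low-frequency $H^{s}$ weight harmless for $t>T_{0}$ (the paper packages this observation as Lemma \ref{Lem1}), followed by the two-case analysis in $p$ and the bootstrap already used for Theorem \ref{the1}. The paper states this proof very tersely (``similar to Theorems \ref{the0} and \ref{the1}, using Lemma \ref{Lem1}''), and your explicit check that the bootstrap exponent $\frac{12\alpha-15}{2\alpha}$ is dominated by $-\frac{3}{2\alpha}\left(\frac{2}{p}-1\right)$ precisely when $p\geq\frac{1}{3-2\alpha}$ is exactly the arithmetic the paper leaves implicit.
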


\begin{proof}[\textbf{Proof of Theorem \ref{10-22-2} and \ref{addthe0}}]
We adopt to the Fourier splitting method again. It follows from
\eqref{f} that
\begin{equation*}
\begin{split}
\frac{d}{dt}\int_{\R^{3}}|\widehat{u}(\xi,t)|^{2}+|\widehat{\Lambda^{s}u}(\xi,t)|^{2}d\xi\leq&-\int_{\R^{3}}|\xi|^{2\alpha} (|\widehat{u}(\xi,t)|^{2}+|\widehat{\Lambda^{s}u}(\xi,t)|^{2})d\xi.
\end{split}
\end{equation*}
Similar to the proof of Theorem \ref{the0} and Theorem \ref{the1},
we have
\begin{equation*}
\begin{split}
\frac{d}{dt}((t+1)^{\gamma}\int_{\R^{3}}|\widehat{u}(\xi,t)|^{2}+|\widehat{\Lambda^{s}u}(\xi,t)|^{2}d\xi)\leq& \gamma(t+1)^{\gamma-1}\int_{|\xi|\leq g(t)}|\widehat{u}(\xi,t)|^{2}+|\widehat{\Lambda^{s}u}(\xi,t)|^{2}d\xi.
\end{split}
\end{equation*}
Similar to \eqref{1} and \eqref{2}, using  Lemma \ref{Lem1}, we get
\begin{equation*}
\|u(t)\|_{H^{s}}^{2}\leq C(t+1)^{-\frac{3}{2\alpha}(\frac{2}{p}-1)}
\end{equation*}
 for any $t>T_{0}$. The proof of Theorem \ref{10-22-2} and
\ref{addthe0} are finished.
\end{proof}

To prove Theorem \ref{the2} and \ref{addthe}, we first present the
following commutator estimate.
\begin{lemma}\label{commutor}
Let $s>0$ and $1<p<\infty$. Then
\begin{equation}\label{com1}
\|\Lambda^{s}(fg)\|_{p}\leq C\|\Lambda^{s}f\|_{p_{1}}\|g\|_{p_{2}}
+\|\Lambda^{s}g\|_{q_{1}}\|f\|_{q_{2}},
\end{equation}
\begin{equation}\label{com2}
\|\Lambda^{s}(fg)-f\Lambda^{s}g\|_{p}\leq C\|\Lambda^{s}f\|_{p_{1}}\|g\|_{p_{2}}
+\|\nabla f\|_{q_{1}}\|\Lambda^{s-1}g\|_{q_{2}},
\end{equation}
where $\frac{1}{p}=\frac{1}{p_{1}}+\frac{1}{p_{2}}=\frac{1}{q_{1}}+\frac{1}{q_{2}}$.
\end{lemma}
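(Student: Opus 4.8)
The plan is to establish both inequalities with the Littlewood--Paley calculus, using Bony's paraproduct decomposition; the restriction $1<p<\infty$ is exactly what makes the square-function (Littlewood--Paley) characterization of the $L^p$ norm available, and this is the engine of the whole argument. Writing $\Delta_j$ for the dyadic frequency projections and $S_j=\sum_{k<j}\Delta_k$ for the low-frequency cutoffs, I would split the product into its low--high, high--low, and resonant parts,
\begin{equation*}
fg=\sum_j S_{j-1}f\,\Delta_j g+\sum_j S_{j-1}g\,\Delta_j f+\sum_{|j-k|\le1}\Delta_j f\,\Delta_k g,
\end{equation*}
and apply $\Lambda^s$ to each piece separately.

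For \eqref{com1}, first I would treat the low--high sum $\sum_j S_{j-1}f\,\Delta_j g$: its $j$-th term is frequency-localized near $2^j$, so $\Lambda^s$ acts there like the scalar $2^{js}$, and pairing $S_{j-1}f$ (bounded in $L^{q_2}$ by $\|f\|_{q_2}$ via Bernstein) against $2^{js}\Delta_j g$ gives, after the square function and H\"older, the term $\|f\|_{q_2}\|\Lambda^s g\|_{q_1}$. The high--low sum is symmetric and yields $\|g\|_{p_2}\|\Lambda^s f\|_{p_1}$. The resonant sum is the place where the hypothesis $s>0$ enters: the output frequency can be far below $2^j$, so the localization of $\Lambda^s$ is lost; I would instead place the full factor $2^{js}$ on one of the two blocks and use $s>0$ to sum the resulting geometric-type series in $j$ after H\"older. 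Collecting the three contributions gives the claimed bound.

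For the commutator \eqref{com2}, the gain comes from cancellation. The high--low and resonant parts of $\Lambda^s(fg)$ are estimated exactly as above and absorbed into $C\|\Lambda^s f\|_{p_1}\|g\|_{p_2}$, since there $\Lambda^s$ effectively falls on the $f$-factor. The subtle term is the low--high paraproduct, which must be compared with $f\Lambda^s g$: on the $j$-th block $S_{j-1}f$ is slowly varying relative to $\Delta_j g$, so a first-order Taylor expansion of the symbol $|\xi|^s$ about the high frequency cancels the leading order and leaves an expression of size $\nabla S_{j-1}f$ times $\Lambda^{s-1}\Delta_j g$. Summing this and invoking the square function and H\"older produces the gained term $\|\nabla f\|_{q_1}\|\Lambda^{s-1}g\|_{q_2}$.

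The hard part will be making the cancellation in \eqref{com2} rigorous rather than heuristic: one must realize the operator $g\mapsto \Lambda^s(S_{j-1}f\cdot\Delta_j g)-S_{j-1}f\,\Lambda^s\Delta_j g$ as an integral operator and extract one full derivative of $f$ from its kernel, which is precisely where the $\nabla f$ (in place of $\Lambda^s f$) originates. I note that both displays are the classical Kato--Ponce / Kenig--Ponce--Vega fractional Leibniz and commutator estimates, so in the write-up one may alternatively cite those sources directly; the paraproduct scheme sketched above is the self-contained route and is the one I would carry out.
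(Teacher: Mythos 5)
You should know at the outset that the paper does not prove this lemma at all: it states it and refers to Kato--Ponce \cite{[Katocommutor]}, so there is no in-paper argument to compare against, and your self-contained paraproduct route also differs from the cited source (whose original proof goes through bilinear Coifman--Meyer-type multiplier estimates and a mean-value decomposition of the symbol $|\xi+\eta|^{s}-|\eta|^{s}$, not Bony's decomposition). Your sketch of \eqref{com1} is the standard Littlewood--Paley argument and the division of labor is right: square functions plus the maximal function for the two paraproducts, and the hypothesis $s>0$ to sum the resonant part.

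For \eqref{com2}, however, there is a concrete gap. Writing $T_{f}g=\sum_{j}S_{j-1}f\,\Delta_{j}g$ and $R(f,g)=\sum_{|j-k|\le 1}\Delta_{j}f\,\Delta_{k}g$, your cancellation step (Taylor expansion of the symbol on each block) estimates exactly the paraproduct commutator $[\Lambda^{s},T_{f}]g$. But the quantity in the lemma is the commutator with multiplication by $f$, not with $T_{f}$, and
\begin{equation*}
\Lambda^{s}(fg)-f\Lambda^{s}g=[\Lambda^{s},T_{f}]g+\Lambda^{s}\bigl(T_{g}f+R(f,g)\bigr)-\bigl(T_{\Lambda^{s}g}f+R(f,\Lambda^{s}g)\bigr).
\end{equation*}
Your proposal accounts for the first two groups but never mentions the last one, $T_{\Lambda^{s}g}f+R(f,\Lambda^{s}g)$, which does not vanish and is not negligible. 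These terms are controlled by $C\|\Lambda^{s}f\|_{p_{1}}\|g\|_{p_{2}}$, but that requires its own argument: for $T_{\Lambda^{s}g}f$ one needs the pointwise bound $|S_{j-1}\Lambda^{s}g|\lesssim 2^{js}Mg$ (where $M$ is the Hardy--Littlewood maximal operator), whose proof again uses $s>0$ to sum the low frequencies, followed by the square-function characterization of $\|\Lambda^{s}f\|_{p_{1}}$; for $R(f,\Lambda^{s}g)$ a blockwise H\"older-plus-triangle-inequality estimate diverges (the Besov-type bookkeeping lands at the forbidden regularity $0$), and one must instead use a pointwise Cauchy--Schwarz in $j$ together with the Fefferman--Stein vector-valued maximal inequality. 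Once these terms are included, your outline becomes the standard complete proof (as in Kenig--Ponce--Vega or Grafakos--Oh); as written, the cancellation argument alone does not yet yield \eqref{com2}.
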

The proof is referred to \cite{[Katocommutor]} and the details are omitted here .

Now we give the proof of Theorem \ref{the2} and \ref{addthe}.

\begin{proof}[\textbf{Proof of Theorems \ref{the2}}]
For any $m\in \mathbb{N}$, applying $\Lambda^{m}$ on both sides of
the first equation of \eqref{eq-1}, multiplying the resulting
equation by $\Lambda^{m}u$ and integrating by parts, we obtain
\begin{equation}\label{k}
\frac{d}{dt}\|\Lambda^{m}u\|_{2}^{2}+\|\Lambda^{m+\alpha}u\|_{2}^{2}=
-\int_{\R^{3}}\Lambda^{m}u\cdot \Lambda^{m}(u\cdot \nabla u)dx.
\end{equation}
By \eqref{com1}, we have
\begin{equation}\label{f3}
\begin{split}
\frac{d}{dt}\|\Lambda^{m}u\|_{2}^{2}+\|\Lambda^{m+\alpha}u\|_{2}^{2}\leq & C\|\Lambda^{m+\alpha}u\|_{2}\|\Lambda^{m-\alpha+1}(u\otimes u)\|_{2}\\ \leq &
\|\Lambda^{m+\alpha}u\|_{2}
\|\Lambda^{m-\alpha+1}u\|_{2}\|u\|_{\infty}.
\end{split}
\end{equation}
Since $$\|\Lambda^{m-\alpha+1}u\|_{2}\leq
C\|\Lambda^{m+\alpha}u\|_{2}^{\frac{1}{\alpha}-1}
\|\Lambda^{m}u\|_{2}^{2-\frac{1}{\alpha}},
~~\frac{1}{2}\leq\alpha\leq1,$$ one has
\begin{equation}\label{h1}
\begin{split}
\frac{d}{dt}\|\Lambda^{m}u\|_{2}^{2}+\|\Lambda^{m+\alpha}u\|_{2}^{2}
\leq& \|\Lambda^{m+\alpha}u\|_{2}^{\frac{1}{\alpha}}
\|\Lambda^{m}u\|_{2}^{2-\frac{1}{\alpha}}\|u\|_{\infty}\\ \leq &\frac{1}{4}\|\Lambda^{m+\alpha}u\|_{2}^{2}+C\|u\|_{\infty}^{\frac{2\alpha}{2\alpha-1}}
\|\Lambda^{m}u\|_{2}^{2}.
\end{split}
\end{equation}
Using Theorem \ref{the0} and Theorem \ref{10-22-2} yields
\begin{equation}\label{h}
\|u\|_{\infty}\leq C\|u\|_{2}^{\frac{1}{2}}\|\Lambda
^{3}u\|_{2}^{\frac{1}{2}}\leq C(t+1)^{-\frac{3}{4\alpha}}
\end{equation}
for   any $t>T_{0}$ and $\frac{1}{2}\leq\alpha\leq1$.
 Putting \eqref{h} into
\eqref{h1}, one has
\begin{equation}\label{h2}
\begin{split}
\frac{d}{dt}\|\Lambda^{m}u\|_{2}^{2}+\|\Lambda^{m+\alpha}u\|_{2}^{2}
\leq&C(t+1)^{-\frac{3}{4\alpha-2}}\|\Lambda^{m}u\|_{2}^{2}
\\ \leq&C(t+1)^{-1}\|\Lambda^{m}u\|_{2}^{2}
\end{split}
\end{equation}
for $\frac{1}{2}<\alpha\leq1$. In the case of
$0<\alpha\leq\frac{1}{2}$, we can also establish the similar
estimate as in \eqref{h2}. Indeed, by divergence free condition,
\eqref{k} can be rewritten as
\begin{equation}\label{k1}
\frac{d}{dt}\|\Lambda^{m}u\|_{2}^{2}+\|\Lambda^{m+\alpha}u\|_{2}^{2}=
-\int_{\R^{3}}\Lambda^{m}u\cdot (\Lambda^{m}(u\cdot\nabla u)-u\cdot\nabla\Lambda^{m} u)dx.
\end{equation}
Use the commutator estimate \eqref{com2} to get
\begin{equation}\label{k2}
\begin{split}
\frac{d}{dt}\|\Lambda^{m}u\|_{2}^{2}+\|\Lambda^{m+\alpha}u\|_{2}^{2}
\leq&C\|\nabla u\|_{\infty}\|\Lambda^{m}u\|_{2}^{2}.
\end{split}
\end{equation}
It follows from Theorem \ref{the0} and Theorem \ref{10-22-2} that,
for any $t>T_{0}$ and $0<\alpha\leq\frac{1}{2}$,
\begin{equation}\label{k3}
\begin{split}
\|\nabla u\|_{\infty}\leq& C\|u\|_{2}^{\frac{1}{6}}\|\Lambda
^{3}u\|_{2}^{\frac{5}{6}} \leq C(t+1)^{-\frac{3}{4\alpha}}
\\ \leq&C(t+1)^{-1}.
\end{split}
\end{equation}
Hence, we obtain that, for $0<\alpha\leq1$,
\begin{equation*}
\frac{d}{dt}\|\Lambda^{m}u\|_{2}^{2}+\|\Lambda^{m+\alpha}u\|_{2}^{2}
\leq C(t+1)^{-1}\|\Lambda^{m}u\|_{2}^{2}.
\end{equation*}
Let $$D_{i}(t)=\{\xi\in \R^{3}: |\xi|\leq f_{i}(t)\},~
l>\frac{3}{2\alpha}+\frac{m}{\alpha},~
f_{i}(t)=(\frac{l+i}{t+1})^{\frac{1}{2\alpha}}.~ i=0,1.$$
Then
\begin{equation}\label{c3}
\begin{split}
\|\Lambda^{m+\alpha}u\|_{2}^{2}=&\int_{\R^{3}}|\xi|^{2\alpha}|\widehat{\Lambda^{m}u}(\xi,t)|^{2}d\xi
\\ \geq&\int_{|\xi|\geq f_{i}(t)}|\xi|^{2\alpha}|\widehat{\Lambda^{m}u}(\xi,t)|^{2}d\xi
\\ \geq& f_{i}^{2\alpha}(t)\|\Lambda^{m}u\|_{2}^{2}-f_{i}^{2\alpha+2}(t)\int_{|\xi|\leq f_{i}(t)}|\widehat{\Lambda^{m-1}u}(\xi,t)|^{2}d\xi\\ \geq& f_{i}^{2\alpha}(t)\|\Lambda^{m}u\|_{2}^{2}-f_{i}^{2\alpha+2}(t)
\int_{\R^{3}}|\widehat{\Lambda^{m-1}u}(\xi,t)|^{2}d\xi,
\end{split}
\end{equation}
where $i=0,1$. Inserting \eqref{c3} with $i=1$ into  \eqref{h2}, we
get
\begin{equation}\label{c5}
\frac{d}{dt}\|\Lambda^{m}u\|_{2}^{2}+\frac{l}{t+1}\|\Lambda^{m}u\|_{2}^{2}
\leq C(\frac{l+1}{t+1})^{\frac{2\alpha+2}{2\alpha}}\|\Lambda^{m-1}u\|_{2}^{2}.
\end{equation}
To complete the proof, we use the inductions for $m$. The case $m=0$
has been proved in Theorem \ref{the0}. Assume that
$$\|\Lambda^{m-1}u\|_{2}^{2}\leq C_{m-1}(t+1)^{-\rho_{m-1}}, ~~
\rho_{m-1}=\frac{3}{2\alpha}+\frac{m-1}{\alpha}.$$ Then, thanks to
\eqref{c5}, we have
\begin{equation}\label{c6}
\frac{d}{dt}((t+1)^{l}\|\Lambda^{m}u\|_{2}^{2}) \leq
C_{m-1}(t+1)^{l-\rho_{m-1}-\frac{\alpha+1}{\alpha}}.
\end{equation}
Integrating  \eqref{c6} in time from  $T_{0}$ to $t$ yields
\begin{equation*}
(t+1)^{l}\|\Lambda^{m}u(t)\|_{2}^{2}
\leq(T_{0}+1)^{l}\|\Lambda^{m}u(T_{0})\|_{2}^{2}+C_{m-1}(t+1)^{l-\rho_{m-1}-\frac{1}{\alpha}},
\end{equation*}
which implies
\begin{equation*}
\begin{split}
\|\Lambda^{m}u(t)\|_{2}^{2}
\leq &C_{m}(t+1)^{-\rho_{m-1}-\frac{1}{\alpha}}
\\ \leq &C_{m}(t+1)^{-\frac{3}{2\alpha}-\frac{m}{\alpha}}.
\end{split}
\end{equation*}
The proof of Theorems \ref{the2} is finished.
\end{proof}

\begin{proof}[\textbf{Proof of Theorems \ref{addthe}}]
In  case of $1\leq\alpha<\frac{5}{4}$, since
$\frac{5}{2}-2\alpha\in(0, \frac{1}{2})$,  we obtain that, for any
$m\in \mathbb{N}$, $m\geq\frac{5}{2}-2\alpha$. Therefore,  Theorem
\ref{addd0} implies
\begin{equation}\label{llm}
\frac{d}{dt}\|\Lambda^{m}u\|^{2}_{2}\leq -\|\Lambda^{m+\alpha}u\|^{2}_{2}.
\end{equation}
Inserting \eqref{c3} with $i=0$ into \eqref{llm} yields
\begin{equation}\label{addd1}
\frac{d}{dt}\|\Lambda^{m}u\|_{2}^{2}+\frac{l}{t+1}\|\Lambda^{m}u\|_{2}^{2}
\leq C(\frac{l}{t+1})^{\frac{2\alpha+2}{2\alpha}}\|\Lambda^{m-1}u\|_{2}^{2}.
\end{equation}
Adopting to similar procedure in the proof of Theorem \ref{the2}, we
finish the proof of Theorem \ref{addthe}.
\end{proof}

\appendix
\section{ Existence of weak solutions}
\label{Appendix}
\setcounter{section}{4}\setcounter{equation}{0}
In this section we show that the generalized Navier-Stokes equations with $\alpha>0$
have a global weak solution corresponding to any prescribed $L^{2}$ initial data.

We start with a definition of weak solutions for \eqref{eq-1} with $L^{2}$ initial data $u_{0}$. Let $T>0$ be arbitrarily fixed.

\begin{definition}\label{app}
The function pair $(u(x,t), p(x,t))$ is called a weak solution of the problem \eqref{eq-1} if the following conditions are satisfied:
\begin{enumerate}[(1)]
\item
$u\in L^{\infty}(0,T;L^{2}(\R^{3}))\cap L^{2}(0,T;H^{\alpha}(\R^{3})),$
\item for any $\Phi\in C_{0}^{\infty}([0,T)\times\R^{3})$ with $\Phi(\cdot, T)=0$, we have $$\int_{0}^{T}\langle u, \Phi_{t} \rangle-\langle \Lambda^{\alpha}u, \Lambda^{\alpha}\Phi \rangle-\langle u\cdot \nabla u, \Phi\rangle dt=-\langle u(0), \Phi(0) \rangle,$$
\item $\Div~ u(x, t)=0$ for a.e. $(x, t)\in\R^{3}\times[0,T).$
\end{enumerate}
\end{definition}

The following theorem states that there exists  global-in-time weak solutions of \eqref{eq-1}.
\begin{theorem}\label{app0}
Let $T>0$ be fixed and $\alpha>0$. Assume that $u_{0}\in L^{2}(\R^{3})$. Then the system \eqref{eq-1} possess a weak solution obeying  Definition \ref{app} over $[0, T]$.
\end{theorem}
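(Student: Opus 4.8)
The plan is to construct weak solutions via the Galerkin-type approximation already introduced in the Remark of Section 2, namely the spectrally truncated system
\begin{equation*}
\left\{\ba
&\partial_{t}u_{N}+PJ_{N}(u_{N}\cdot\nabla u_{N})+\Lambda^{2\alpha}u_{N}=0,\\
&\Div~ u_{N}=0,\\
&u_{N}(x,0)=J_{N}u_{0},
\ea\right.
\end{equation*}
where $J_{N}$ is the spectral cutoff and $P$ the Leray projector. First I would observe that, since $\widehat{u_{N}}$ is supported in $\{|\xi|\le N\}$, this is a locally Lipschitz ODE in the Banach space $PJ_{N}L^{2}$, so the Picard--Lindel\"of theorem yields a unique local-in-time solution $u_{N}$. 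To extend it globally I would derive the energy identity by pairing the equation with $u_{N}$ in $L^{2}$: because $P$ is self-adjoint, commutes with $J_{N}$, and the truncated convection term satisfies $\langle PJ_{N}(u_{N}\cdot\nabla u_{N}),u_{N}\rangle=\langle u_{N}\cdot\nabla u_{N},J_{N}u_{N}\rangle=\langle u_{N}\cdot\nabla u_{N},u_{N}\rangle=0$ by the divergence-free condition, one obtains
\begin{equation*}
\frac{1}{2}\frac{d}{dt}\|u_{N}(t)\|_{2}^{2}+\|\Lambda^{\alpha}u_{N}(t)\|_{2}^{2}=0.
\end{equation*}
Integrating gives the uniform bound $\|u_{N}(t)\|_{2}^{2}+2\int_{0}^{t}\|\Lambda^{\alpha}u_{N}\|_{2}^{2}\,ds\le\|u_{0}\|_{2}^{2}$, which precludes blow-up and furnishes global existence together with the uniform bounds $u_{N}\in L^{\infty}(0,T;L^{2})\cap L^{2}(0,T;H^{\alpha})$ required by Definition \ref{app}(1).

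Next I would pass to the limit $N\to\infty$ along a subsequence. The uniform energy bound yields, by Banach--Alaoglu, weak-$*$ convergence of $u_{N}$ in $L^{\infty}(0,T;L^{2})$ and weak convergence in $L^{2}(0,T;H^{\alpha})$ to some limit $u$ in the same spaces. The linear terms $\langle u_{N},\Phi_{t}\rangle$ and $\langle\Lambda^{\alpha}u_{N},\Lambda^{\alpha}\Phi\rangle$ pass to the limit directly by weak convergence against the fixed test function $\Phi$. To handle the nonlinear term I would establish a uniform bound on $\partial_{t}u_{N}$ in a negative-order space: from the equation, $\partial_{t}u_{N}=-PJ_{N}(u_{N}\cdot\nabla u_{N})-\Lambda^{2\alpha}u_{N}$, and the convection term is bounded in $L^{2}(0,T;H^{-s})$ for suitable $s$ since $u_{N}\otimes u_{N}$ is controlled in $L^{2}(0,T;L^{1}\cap L^{3/2})$ by interpolation between $L^{\infty}L^{2}$ and $L^{2}H^{\alpha}$ (using the Sobolev embedding $H^{\alpha}\hookrightarrow L^{6/(3-2\alpha)}$ when $\alpha<3/2$). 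With $u_{N}$ bounded in $L^{2}(0,T;H^{\alpha})$ and $\partial_{t}u_{N}$ bounded in $L^{2}(0,T;H^{-s})$, the Aubin--Lions--Simon lemma gives strong convergence of $u_{N}$ in $L^{2}(0,T;L^{2}_{loc}(\R^{3}))$; this is precisely the compactness asserted in the excerpt just after the Remark.

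The main obstacle, as always in the existence theory for such equations, is the passage to the limit in the nonlinear term $\langle J_{N}(u_{N}\cdot\nabla u_{N}),\Phi\rangle$. I would rewrite it in the symmetric form $-\langle u_{N}\otimes u_{N},\nabla(J_{N}\Phi)\rangle$ and argue that $J_{N}\Phi\to\Phi$ strongly while, on the support of $\Phi$, the strong $L^{2}_{loc}$ convergence of $u_{N}$ combined with the uniform $L^{\infty}(0,T;L^{2})$ bound allows one to pass to the limit in the quadratic expression $u_{N}\otimes u_{N}\to u\otimes u$ in $L^{1}_{loc}$. The low regularity in the supercritical range $\alpha<\tfrac{5}{4}$ makes this the delicate step, but since the test function $\Phi$ is smooth and compactly supported, local strong convergence suffices and no global control beyond the energy estimate is needed. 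Combining the limits of all three terms recovers the weak formulation of Definition \ref{app}(2), while the divergence-free condition \ref{app}(3) is preserved because $\Div~ u_{N}=0$ passes to the weak limit. Since $T>0$ was arbitrary, a standard diagonal argument over $T=1,2,\dots$ yields a global-in-time weak solution, completing the proof of Theorem \ref{app0}.
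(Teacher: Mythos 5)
Your proposal follows essentially the same route as the paper's Appendix proof: Friedrichs spectral truncation, Picard's theorem in the band-limited space, the energy identity giving uniform $L^{\infty}(0,T;L^{2})\cap L^{2}(0,T;H^{\alpha})$ bounds, a uniform bound on $\partial_{t}u_{N}$ in a negative-order Sobolev space, Aubin--Lions compactness in $L^{2}(0,T;L^{2}_{loc}(\R^{3}))$, and a diagonal argument over an exhaustion of $\R^{3}$; the only cosmetic differences are that the paper first poses the ODE with the doubly projected nonlinearity $PJ_{N}(PJ_{N}u\cdot PJ_{N}\nabla u)$ and recovers $Pu_{N}=u_{N}$, $J_{N}u_{N}=u_{N}$ by uniqueness, while you work directly in the projected space, and you give more detail than the paper on passing to the limit in the nonlinear term. (One small slip: for $\alpha<1$ the embedding $H^{\alpha}\hookrightarrow L^{6/(3-2\alpha)}$ only gives $u_{N}\otimes u_{N}$ bounded in $L^{2}(0,T;L^{q})$ with $q=3/(3-\alpha)<3/2$, not $L^{3/2}$, but the uniform $L^{\infty}(0,T;L^{1})$ bound from the energy estimate already embeds in $H^{-s}$ for $s>3/2$, so your Aubin--Lions step goes through unchanged.)
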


We will use the Friedrichs method to prove Theorem \eqref{app0}.
Before that, let us recall the following Picard theorem
\cite{[Majda]} and Bernstein inequality \cite{[1]}.
\begin{theorem}\label{app1}(Picard Theorem  on a Banach Space).
Let $O\subseteq B$ be an open subset of a  Banach Space $B$ and let $F:O\rightarrow B$ be a mapping that satisfies the following parameters:
\begin{enumerate}[(i)]
\item
$F(X)$ maps $O$ to $B$.
\item $F$ is locally Lipschitz continuous, i.e., for any $X\in O$ there exists $L>0$ and an open neighborhood $U_{X}$ of $X$ such that $$\|F(\widetilde{X})-F(\widehat{X})\|_{B}\leq L\|\widetilde{X}-\widehat{X}\|_{B}, ~~\text{for all}~~ \widetilde{X}, \widehat{X}\in U_{X}.$$
\end{enumerate}
Then, for any $X_{0}\in O$, there exists a time $T$ such that the ODE $$\frac{dX}{dt}=F(X), X|_{t=0}=X_{0}\in O$$ has a unique (local) solution $X\in C^{1}[(-T, T); O]$. In addition, the unique solution $X(t)$ either exists globally in time, or $T<\infty$ and $X(t)$ leaves the open set $O$ as $t\nearrow T$.
\end{theorem}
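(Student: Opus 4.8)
The plan is to recast the initial value problem as a fixed-point equation for an integral operator, invoke the contraction mapping principle to obtain a local solution, and then upgrade it to a maximal one by a continuation argument. First, since local Lipschitz continuity of $F$ implies continuity, a function $X\in C^{1}$ solves $\frac{dX}{dt}=F(X)$ with $X(0)=X_{0}$ if and only if $X$ is a continuous solution of the integral equation $X(t)=X_{0}+\int_{0}^{t}F(X(s))\,ds$. I would therefore define the operator $\Phi$ by $\Phi(X)(t)=X_{0}+\int_{0}^{t}F(X(s))\,ds$ and seek a fixed point in a suitable complete metric space.

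Next I would localize. Because $O$ is open and $F$ is locally Lipschitz at $X_{0}$, I can fix $r>0$ so that the closed ball $\overline{B}(X_{0},r)\subseteq U_{X_{0}}\subseteq O$, on which $F$ is $L$-Lipschitz and, by continuity, bounded by some $M:=\sup_{\overline{B}(X_{0},r)}\norm{F}_{B}$. For a time $T>0$ to be chosen, I would work in $\mathcal{M}=\{X\in C([-T,T];B):\ \sup_{|t|\leq T}\norm{X(t)-X_{0}}_{B}\leq r\}$ equipped with the supremum metric; as a closed subset of a Banach space, $\mathcal{M}$ is a complete metric space. The bound $\norm{\Phi(X)(t)-X_{0}}_{B}\leq |t|\,M\leq TM$ shows that $\Phi$ maps $\mathcal{M}$ into itself once $T\leq r/M$, while $\norm{\Phi(X)(t)-\Phi(Y)(t)}_{B}\leq |t|\,L\sup_{|s|\leq T}\norm{X(s)-Y(s)}_{B}$ shows that $\Phi$ is a contraction once $T<1/L$. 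Choosing $T=\min\{r/M,\,1/(2L)\}$, Banach's fixed-point theorem yields a unique fixed point $X\in\mathcal{M}$, which is the desired unique local solution; since $s\mapsto F(X(s))$ is then continuous, the integral equation forces $X\in C^{1}$ and recovers the ODE in the classical sense.

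Finally, I would patch the local solutions to define the maximal interval of existence $(-T^{*},T^{*})$ and establish the dichotomy. This continuation step is the main obstacle, and it is where the infinite-dimensional setting requires care: the crucial feature of the construction above is that the local existence time depends only on the bounds $r$, $M$, $L$ attached to a neighborhood of the data, not on the base time. The argument I would run is by contradiction. If $T^{*}<\infty$ and yet $X(t)$ remained, as $t\nearrow T^{*}$, inside a set on which $F$ stays uniformly bounded and uniformly Lipschitz (so that the trajectory stays away from $\partial O$, e.g. inside a compact subset of $O$), then the local existence time would admit a uniform lower bound; taking $X(t_{0})$ with $t_{0}$ close enough to $T^{*}$ as new initial data and reapplying the local theory would extend the solution beyond $T^{*}$, contradicting maximality. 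Hence either $T^{*}=\infty$ and the solution is global, or $T^{*}<\infty$ and $X(t)$ must leave the open set $O$ as $t\nearrow T^{*}$.
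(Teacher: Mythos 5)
The paper never proves this theorem: it is recalled, essentially verbatim, from the cited monograph \cite{[Majda]} of Majda--Bertozzi, so there is no internal proof to compare against. Your argument is the standard contraction-mapping (Picard--Lindel\"of) proof --- integral reformulation, Banach fixed point on a ball in $C([-T,T];B)$, then continuation via a uniform lower bound on the local existence time --- and this is exactly the route taken in the cited reference, so in substance your proposal reproduces the canonical proof. Two small repairs are needed. First, you justify $M:=\sup_{\overline{B}(X_{0},r)}\|F\|_{B}<\infty$ ``by continuity''; in an infinite-dimensional Banach space closed balls are not compact, so continuity alone does not yield boundedness. The bound instead comes from the Lipschitz estimate itself: $\|F(X)\|_{B}\leq\|F(X_{0})\|_{B}+L\,r$ for all $X\in\overline{B}(X_{0},r)$. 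Second, the fixed-point theorem gives uniqueness only within your class $\mathcal{M}$, whereas the theorem asserts uniqueness among all $C^{1}$ solutions taking values in $O$; this requires one more line, e.g.\ a Gronwall inequality ($\|X(t)-Y(t)\|_{B}\leq L\int_{0}^{t}\|X(s)-Y(s)\|_{B}\,ds$ while both solutions stay in $U_{X_{0}}$) together with an open-closed argument on the common interval of definition. Your continuation step is formulated with the right care: since neighborhoods and Lipschitz constants are only local, the uniform lower bound on the re-started existence time must be extracted from uniform boundedness and uniform Lipschitz data along the trajectory (which does hold on a compact subset of $O$, as you note), and the contradiction with maximality then gives precisely the dichotomy ``global existence or $X(t)$ leaves $O$ as $t\nearrow T$'' claimed in the statement.
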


\begin{proposition}\label{App0}(Bernstein inequality).
Let B be a ball of $\R^{d}$. Then, there exists a positive
constant C such that for all integer $k\geq0$, all $b\geq a\geq 1$ and $u\in{L^{a}}$, the following
estimates are satisfied: $$\sup_{|\alpha|=k} \|\partial^{\alpha}u\|_{b}\leq C^{k+1}\lambda^{k+d(\frac{1}{a}-\frac{1}{b})}\|u\|_{a}, ~~ supp \hat{u}\subset \lambda B.$$
\end{proposition}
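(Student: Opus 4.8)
The statement is the classical Bernstein inequality, and the plan is to reduce it to a convolution estimate by inserting a fixed frequency cutoff, to extract all the powers of $\lambda$ by pure scaling, and to read off the $L^b$ bound from Young's convolution inequality. First I would fix, once and for all, a function $\chi\in C_{0}^{\infty}(\R^{d})$ with $\chi\equiv1$ on a neighbourhood of $\overline{B}$, supported in a ball of radius $R$, and set $h=\check{\chi}$, the inverse Fourier transform of $\chi$; since $\chi$ is smooth and compactly supported, $h$ is a Schwartz function. Because $\mathrm{supp}\,\widehat{u}\subset\lambda B$, the dilate $\chi(\cdot/\lambda)$ equals $1$ on $\mathrm{supp}\,\widehat{u}$, so $\widehat{u}(\xi)=\chi(\xi/\lambda)\widehat{u}(\xi)$. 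Transforming back yields the reproducing identity $u=h_{\lambda}*u$ with $h_{\lambda}(x)=\lambda^{d}h(\lambda x)$, and differentiating under the convolution gives $\partial^{\alpha}u=(\partial^{\alpha}h_{\lambda})*u$ for every multi-index $\alpha$.

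Next I would apply Young's convolution inequality with the exponent $r$ defined by $1+\frac{1}{b}=\frac{1}{r}+\frac{1}{a}$; this $r$ satisfies $1\leq r\leq\infty$ precisely because $b\geq a\geq1$, and it gives $\|\partial^{\alpha}u\|_{b}\leq\|\partial^{\alpha}h_{\lambda}\|_{r}\|u\|_{a}$. The powers of $\lambda$ now come entirely from scaling: from $\partial^{\alpha}h_{\lambda}(x)=\lambda^{d+|\alpha|}(\partial^{\alpha}h)(\lambda x)$ and a change of variables one gets $\|\partial^{\alpha}h_{\lambda}\|_{r}=\lambda^{d+|\alpha|-d/r}\|\partial^{\alpha}h\|_{r}$. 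Substituting $\frac{d}{r}=d-d(\frac{1}{a}-\frac{1}{b})$ and $|\alpha|=k$ collapses the exponent to exactly $k+d(\frac{1}{a}-\frac{1}{b})$, which is the power appearing in the statement.

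The remaining, and genuinely delicate, point is to bound $\|\partial^{\alpha}h\|_{r}$ by $C^{k+1}$ uniformly over all $r\in[1,\infty]$ and all $|\alpha|=k$, since the exponents $a,b$ (hence $r$) are allowed to vary. I would obtain this by interpolation: it suffices to control $\|\partial^{\alpha}h\|_{1}$ and $\|\partial^{\alpha}h\|_{\infty}$ by $C^{k+1}$, for then $\|\partial^{\alpha}h\|_{r}\leq\|\partial^{\alpha}h\|_{1}^{1/r}\|\partial^{\alpha}h\|_{\infty}^{1-1/r}\leq C^{k+1}$ for every intermediate $r$. The $L^{\infty}$ bound is immediate from $\|\partial^{\alpha}h\|_{\infty}\leq\|(2\pi i\xi)^{\alpha}\chi\|_{1}\leq(2\pi R)^{k}\|\chi\|_{1}$, and the geometric factor $(2\pi R)^{k}$ is exactly what produces the shape $C^{k+1}$ of the constant. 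The $L^{1}$ bound is the main obstacle, since it must exploit the Schwartz decay of $h$ while retaining the same geometric control in $k$: I would write $(1+|x|^{2})^{d}\partial^{\alpha}h(x)$ as the inverse transform of $(1-\Delta_{\xi}/4\pi^{2})^{d}\bigl[(2\pi i\xi)^{\alpha}\chi(\xi)\bigr]$, bound its $L^{\infty}$ norm by a constant of the form $C^{k+1}$ (the operator $(1-\Delta_{\xi}/4\pi^{2})^{d}$ lands on a compactly supported smooth function and at worst multiplies the polynomial weight by a factor polynomial in $k$, which is absorbed into the geometric base $C$), and conclude that $\partial^{\alpha}h$ is integrable with $\|\partial^{\alpha}h\|_{1}\leq C^{k+1}$. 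Combining the three displays then yields the asserted estimate, completing the proof.
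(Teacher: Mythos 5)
Your proof is correct. The paper itself offers no proof of Proposition \ref{App0} --- the inequality is simply recalled from the reference \cite{[1]} --- and your argument is essentially the standard proof given there: the same smooth frequency cutoff producing the reproducing identity $u=h_{\lambda}*u$, differentiation under the convolution, Young's inequality with $1+\frac{1}{b}=\frac{1}{r}+\frac{1}{a}$, pure scaling to extract $\lambda^{k+d(\frac{1}{a}-\frac{1}{b})}$, and the weight $(1+|x|^{2})^{d}$ paired with $(1-\Delta_{\xi}/4\pi^{2})^{d}$ on the Fourier side to get the geometric constant $C^{k+1}$. The only cosmetic difference is that you obtain the bound on $\|\partial^{\alpha}h\|_{r}$ uniformly in $r$ by interpolating between your $L^{1}$ and $L^{\infty}$ estimates, whereas the reference covers all $r\in[1,\infty]$ in one stroke via $\|\partial^{\alpha}h\|_{r}\leq \|(1+|\cdot|^{2})^{-d}\|_{r}\,\|(1+|\cdot|^{2})^{d}\partial^{\alpha}h\|_{\infty}$, using that $(1+|x|^{2})^{-d}$ lies in every $L^{r}(\R^{d})$; both devices rest on the same weighted $L^{\infty}$ bound.
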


\begin{proof}[\textbf{Proof of Theorems \ref{app0}}]
For $N\geq1$, let $J_{N}$ be the spectral cutoff defined by $$\widehat{J_{N}f}(\xi)=1_{[0, N]}(|\xi|)\widehat{f}(\xi).$$
Let $P$ denote the Leray projector over divergence-free vector-fields. Consider the following ODE in the space $L_{N}^{2}=\{f\in L^{2}(\R^{3}):~supp \widehat{f}(\xi)\subset B(0, N)\},$
\be\label{app2}
\left\{\ba
&u_{t}+PJ_{N}(PJ_{N}u\cdot PJ_{N}\nabla u)+ PJ_{N}\Lambda^{2\alpha}u=0,\\
&u(x,0)=J_{N}u_{0}.\\  \ea\ \right. \ee We shall apply Picard
Theorem to show the existence (local) and uniqueness of solution to
\eqref{app2}. We write $$\frac{du}{dt}=-PJ_{N}(PJ_{N}u\cdot
PJ_{N}\nabla u)- PJ_{N}\Lambda^{2\alpha}u\triangleq F(u).$$ Then $F$
satisfies the local Lipschitz condition. In fact, for any $u, v \in
L_{N}^{2}$, by the H\"{o}lder inequality and the Bernstein
inequality,  we get
\begin{equation*}
\begin{split}
&\|PJ_{N}(PJ_{N}u\cdot PJ_{N}\nabla u)-PJ_{N}(PJ_{N}v\cdot PJ_{N}\nabla v)\|_{2}
\\ \leq&\|PJ_{N}(PJ_{N}(u-v)\cdot \nabla PJ_{N} u)\|_{2}+\|PJ_{N}(PJ_{N}v\cdot\nabla PJ_{N}(u-v))\|_{2}
\\ \leq& \|PJ_{N}(u-v)\|_{2}\|\nabla PJ_{N} u\|_{\infty}+\|\nabla PJ_{N}(u-v))\|_{2} \|PJ_{N} v\|_{\infty}
\\ \leq& N^{\frac{5}{2}}(\|u\|_{2}+\|v\|_{2})\|u-v\|_{2}.
\end{split}
\end{equation*}
By the Bernstein  inequality, it follows that
\begin{equation*}
\begin{split}
&\|PJ_{N}\Lambda^{2\alpha}u-PJ_{N}\Lambda^{2\alpha}v\|_{2}\\ \leq&
\|J_{N}\Lambda^{2\alpha}(u-v)\|_{2}\\ \leq& N^{2\alpha}\|u-v\|_{2}.
\end{split}
\end{equation*}
Consequently,
$$\|F(u)-F(v)\|_{2}\leq (N^{\frac{5}{2}}(\|u\|_{2}+\|v\|_{2})+N^{2\alpha})\|u-v\|_{2}.$$
Picard Theorem implies that \eqref{app2} has a unique local (in time) solution $u_{N}\in C^{1}([0, T_{N}); L^{2}_{N})$.
Recall that $P^{2}=P$, $J_{N}^{2}=J_{N}$ and $PJ_{N}=J_{N}P$, it is easy to check that $Pu_{N}$ and $J_{N}u_{N}$ are also solutions of \eqref{app2}. By the uniqueness, $Pu_{N}=u_{N}$ (i.e. $\Div u_{N}=0$) and $J_{N}u_{N}=u_{N}$. Then \eqref{app2} can be simplified as
\be\label{app3}
\left\{\ba
&\partial _{t}u_{N}+PJ_{N}(u_{N}\cdot \nabla u_{N})+ \Lambda^{2\alpha}u_{N}=0,\\
&\Div~ u_{N}=0,\\
&u_{N}(x,0)=J_{N}u_{0}.\\  \ea\ \right. \ee
Multiplying the first equation of\eqref{app3} by $u_{N}$ and integrating by parts, we obtain
$$\frac{1}{2}\frac{d}{dt}\|u_{N}(t)\|_{2}^{2}+\|\Lambda^{\alpha }u_{N}(t)\|_{2}^{2}=0,$$
which implies that
\be\label{app4}
 \|u_{N}(t)\|_{2}^{2}+2\int^{t}_{0}\|\Lambda^{\alpha }u_{N}(s)\|_{2}^{2}\,ds=\|u_{N}(0)\|_{2}^{2}\leq \|u_{0}\|_{2}^{2}.\ee
This implies that $u_{N}$ remains bounded in $L^{2}_{N}$ for finite time, whence $T_{N}=T$.

Next, we will use Aubin-Lions lemma \cite{[Temam]} to prove the strong convergence of $u_{N}$ (or its subsequence) in $L^{2}(0,T;L^{2}(\Omega))$ for any $\Omega\subset \R^{3}$. In fact, for any $h\in L^{2}(0,T; H^{3}(\R^{3}))$ and $\alpha\leq\frac{5}{2}$, we obtain
\be\label{60}\ba
\int^{T}_{0}\langle PJ_{N}(u_{N}(s)\cdot\nabla u_{N}(s)),\,h(s) \rangle \,ds&\leq
\int^{T}_{0}\|u_{N}(s)\|_{2}\|u_{N}(s)\|_{\frac{6}{3-2\alpha}}
\|\nabla h(s)\|_{\frac{3}{\alpha}}\,ds\\
 &\leq C\int^{T}_{0}\|u_{N}(s)\|_{2}\|\Lambda^{\alpha}u_{N}(s)\|_{2}
\|\nabla^{3} h(s)\|_{2}^{\frac{5-2\alpha}{6}}\|h(s)\|_{2}^{\frac{1+2\alpha}{6}}\,ds\\ &
\leq C\|u_{N}\|_{L^{\infty}(0,T;\, L^{2}(\R^{3}))}
 \|\Lambda^{\alpha}u_{N}\|_{L^{2}(0,T;\,L^{2}(\R^{3}))}
\|h\|_{L^{2}(0,\,T;\,H^{3}(\R^{3}))} \\
 &\leq C\|u_{0}\|_{2}^{2}\|h\|_{L^{2}(0,T;\,H^{3}(\R^{3}))},
\ea\ee where the H\"{o}lder inequality and the Gagliardo-Nirenberg
inequality have been used. The H\"older inequality and Sobolev
embedding $H^{3}(\R^{3})\hookrightarrow H^{\alpha}(\R^{3}),
~\alpha\leq\frac{5}{2}$ yield that
$$\ba
\int^{T}_{0}\langle\Lambda^{2\alpha}u_{N}(s), \,h(s) \rangle\, ds&\leq \int^{T}_{0}\|\Lambda^{\alpha}u_{N}(s)\|_{2}\|\Lambda^{\alpha}h(s) \|_{2}\,ds\\
&\leq \Big(\int^{T}_{0}\|\Lambda^{\alpha}u_{N}(s)\|_{2}^{2}\,ds\Big)^{1/2}
\Big(\int^{T}_{0}\|\Lambda^{\alpha}h(s)\|_{2}^{2}\,ds\Big)^{1/2}\\
&\leq \|u_{0}\|_{2}\| h\|_{L^{2}(0,T;\,H^{3}(\R^{3}))}.
\ea$$
Combining these estimates with the first equation of \eqref{app3}, we obtain
\be\label{App}
\partial_{t}u_{N}\in L^{2}(0,T;\,H^{-3}(\R^{3})),\ee
which together with \eqref{app4} yields that
$$u_{N}\rightarrow u ~~~\text{in} ~~~L^{2}(0,T;L^{2}(\Omega)) ~~\text{for any}~~ \Omega\subset \R^{3}.$$

We choose $\Omega_{1}\subset\Omega_{2}\subset\Omega_{3}\subset...$ with smooth boundary satisfying $\cup_{i=1}^{\infty} \Omega_{i}=\R^{3}$. For any fixed $i=1,2,...$, we obtain that there exists a subsequence of $\{u_{N}\}_{N=1}^{\infty}$
still denote by itself, such that $u_{N}$ strongly converges  $u$ in $L^{2}(0,T;L^{2}(\Omega_{i}))$. By the diagonal principle, there exists a subsequence $\{u_{N_{j}}\}_{j=1}^{\infty}$ of $\{u_{N}\}_{N=1}^{\infty}$ such that
$u_{N_{j}}$ strongly converges  $u$ in $L^{2}(0,T;L^{2}(\Omega_{i}))$ for any $i=1,2,...$ and hence in $L^{2}(0,T;L^{2}_{loc}(\R^{3}))$. These convergence guarantee that $u(x,t)$ is a weak solution of \eqref{eq-1}.

When $\alpha>\frac{5}{2}$, it can be proved in a similar way that
system \eqref{eq-1} possess a weak solution obeying  Definition
\ref{app}. The proof of the Theorem is finished.

\end{proof}


\end{document}